\documentclass[revision]{FPSAC2021}


\usepackage[utf8]{inputenc}
\usepackage[T1]{fontenc}
\usepackage{amssymb}
\usepackage{calc}
\usepackage[french,english]{babel}

\usepackage{amsthm}
\usepackage{amsfonts}
\usepackage{amssymb}
\usepackage{tikz}
\usepackage{color}
\usepackage{float}
\usepackage[makeroom]{cancel}
\usepackage{ytableau}
\usepackage{mathdots}
\usepackage{multicol}
\usepackage{mathtools}
\usepackage{stmaryrd}
\usepackage{multirow}
\usepackage{pdflscape}

\usepackage{lipsum}

\newtheorem{teo}{Theorem}[section]
\newtheorem{cor}[teo]{Corollary}
\newtheorem{prop}[teo]{Proposition}
\newtheorem{lema}[teo]{Lemma}

\theoremstyle{definition}

\newtheorem{defin}[teo]{Definition}

\theoremstyle{remark}
\newtheorem{obs}[teo]{Remark}
\newtheorem{ex}[teo]{Example}

\DeclareMathSymbol{\shortminus}{\mathbin}{AMSa}{"39}

\definecolor{lgray}{rgb}{0.65, 0.65, 0.65}
\definecolor{llblue}{HTML}{cce6ff}
\numberwithin{equation}{section}
\widowpenalty10000
\clubpenalty10000

\title[]{Shifted Bender--Knuth moves and a shifted Berenstein--Kirillov group}

\author[Inês Rodrigues]{Inês Rodrigues\thanks{\href{mailto:imarrodrigues@fc.ul.pt}{imarrodrigues@fc.ul.pt}.}\addressmark{1}}

\address{\addressmark{1} Center for Functional Analysis, Linear
Structures and Applications, Faculty of Sciences, University of Lisbon, Portugal}

\received{\today}

\revised{}

\abstract{The Bender--Knuth involutions on Young tableaux are known to coincide with the tableau switching on two adjacent letters, together with a swapping of those letters. Using the shifted tableau switching due to Choi--Nam--Oh (2019), we introduce a shifted version of the Bender--Knuth operators and define a shifted version of the Berenstein--Kirillov group. The actions of the cactus group, due to the author, and of the shifted Berenstein--Kirillov group on the Gillespie--Levinson--Purbhoo straight-shaped shifted tableau crystal (2017, 2020) coincide. Following the works of Halacheva (2016, 2020), and Chmutov--Glick--Pylyavskyy (2016, 2020), on the relation between the actions of the Berenstein--Kirillov group and the cactus group on the crystal of straight-shaped Young tableaux, we show that the shifted Berenstein--Kirillov group is isomorphic to a quotient of the cactus group. Not all the known relations that hold in the classic Berenstein--Kirillov group need to be satisfied by the shifted Bender--Knuth involutions, but the ones implying the relations of the cactus group are verified. Hence we have an alternative presentation for the cactus group via the shifted Bender--Knuth involutions.
}


\keywords{Shifted tableaux, Berenstein--Kirillov group, crystal bases, cactus group.}


%

\begin{document}

\maketitle

\ytableausetup{smalltableaux}

\section{Introduction}
The Bender--Knuth moves $t_i$ are well known involutions on semistandard Young tableaux, that act on adjacent letters $i$ and $i+1$ reverting their multiplicities, and leaving the others unchanged
\cite{BeKn72}. The tableau switching, due to Benkart, Sottile and Stroomer 
\cite{BSS96} is an algorithm on pairs of semistandard Young tableaux $(S,T)$, with $T$ extending $S$, that moves one through the other, obtaining a pair $(^S T,S_T)$ component-wise Knuth equivalent to $(T,S)$. The tableau switching of two adjacent letters, together with a swapping of those letters, coincides with the classic Bender--Knuth involutions 
\cite{BSS96}. Berenstein and Kirillov \cite{BK95} studied relations satisfied by the involutions $t_i$, introducing the Berenstein--Kirillov group $\mathcal{BK}$ (or Gelfand--Tsetlin group), the free group generated by $t_i$, modulo the relations they satisfy on semistandard Young tableaux of any shape 
\cite{BK16,CGP16}. Chmutov, Glick and Pylyavskyy 
\cite{CGP16}, using semistandard growth diagrams, found precise implications between sets of relations in the cactus group $J_n$ 
\cite{HenKam06} and the Berenstein–Kirillov group $\mathcal{BK}_n$, the subgroup of $\mathcal{BK}$ generated by $t_1, \ldots, t_{n-1}$, concluding that $\mathcal{BK}_n$ is isomorphic to a quotient of the cactus group $J_n$, and yielding a presentation for the cactus group in terms of Bender-Knuth generators. Halacheva has remarked 
\cite[Remark 3.9]{Hala20} that this isomorphism may also be obtained by noting the coincidence of the actions of both groups on a crystal of semistandard Young tableaux of straight shape, filled in $[n] := \{1 \!<\! \ldots \!<\! n\}$ 
\cite{Hala16,Hala20}.

Bender--Knuth involutions have been defined by Stembridge for shifted tableaux in 
\cite[Section 6]{Stem90}, but they are not compatible with the canonical form of those tableaux (Section \ref{sec:background}). Motivated by the coincidence of the tableau switching of two adjacent letters, on type $A$ tableaux, with the classic Bender--Knuth involutions, we introduce a shifted version of the Bender--Knuth operators, here denoted $\mathsf{t}_i$, for shifted semistandard tableaux, using the shifted tableau switching introduced by Choi, Nam, and Oh 
\cite{CNO17}. Alternatively, we may use type $C$ infusion 
\cite{TY09} together with the semistandardization \cite{PY17}. Using the shifted Bender--Knuth involutions, we define a shifted version of the Berenstein--Kirillov group, denoted $\mathcal{SBK}$, with $\mathcal{SBK}_n$ being defined analogously. The shifted Bender--Knuth involutions satisfy in $\mathcal{SBK}$ all the relations that the Bender--Knuth involutions are known to satisfy in $\mathcal{BK}$, except $(\mathsf{t}_1 \mathsf{t}_2)^6 = 1$, which does not need to hold (see \cref{rmk:t1t26}). This has no effect on the results that follow, as this relation does not follow from the cactus group relations, similarly to the case with the classic Bender--Knuth involutions 
\cite[Remark 1.9]{CGP16}. 

Following the work in \cite{CGP16, Hala16, Hala20}, we show that the action of $\mathcal{SBK}_n$ on the shifted tableau crystal of Gillespie, Levinson and Purbhoo 
\cite{GLP17} $\mathsf{ShST}(\lambda,n)$, the crystal-like structure on shifted semistandard tableaux of straight shape $\lambda$ and filled with $[n]' := \{1'\!<\!1<\! \cdots \!<\!n' \!<\! n\}$, coincides with the one of $J_n$ on the same crystal 
\cite{Ro20b}, concluding that $\mathcal{SBK}_n$ is isomorphic to a quotient of the cactus group, and due to 
\cite[Theorem 1.8]{CGP16} we have in \eqref{eq:cact_prsnt} another presentation of the cactus group via the shifted Bender--Knuth involutions. 

This paper is organized as follows. \cref{sec:background} provides the basic definitions and algorithms on shifted tableaux, in particular, the reversal and evacuation, as well as the main concepts regarding the shifted tableau switching 
\cite{CNO17}. In \cref{sec:crystal} we briefly recall the basic structure of the shifted tableau crystal 
\cite{GLP17}, and an action of the cactus group \cite{Ro20b}. In \cref{secBK}, we introduce the shifted Bender--Knuth operators $\mathsf{t}_i$ (\cref{def:sbk_mov}), using the shifted tableau switching, and then define a shifted Berenstein--Kirillov group. We then prove the main result (\cref{teo:cact_sbk}) stating that the shifted Berenstein--Kirillov group is isomorphic to a quotient of the cactus group.

This is an extended abstract of a full paper to appear.

\section{Background}\label{sec:background}
A \emph{strict partition} is a sequence $\lambda = (\lambda_1 \!>\! \cdots \!>\! \lambda_k)$ of positive integers, called the parts of $\lambda$, displayed in decreasing order. 
A strict partition $\lambda$ is identified with its \emph{shifted shape} $S(\lambda)$, a diagram whose $i$-th row have $\lambda_i$ boxes, with each row being shifted $i-1$ units to the right. 
Skew shapes are defined as expected, with shapes of the form $\lambda/\emptyset$ being called \emph{straight}.
A shifted shape $\lambda$ lies naturally in the ambient triangle of the \emph{shifted staircase shape} $\delta = (\lambda_1, \lambda_1-1, \ldots, 1)$. We define the \emph{complement} of $\lambda$ to be the strict partition $\lambda^{\vee}$ whose set of parts is the complement of the set of parts of $\lambda$ in $\{\lambda_1, \lambda_1-1, \ldots, 1\}$. 
In particular, $\emptyset^{\vee} = \delta$. We consider the \emph{primed} alphabet  $[n]' := \{ 1'\!<\! 1 \!<\! \cdots \!<\! n' \!<\! n\}$.
	
Given strict partitions $\lambda$ and $\mu$ such that $\mu \subseteq \lambda$, a \emph{shifted semistandard tableau} $T$ of shape $\lambda / \mu$ is a filling of $\lambda/\mu$ with letters in $[n]'$ such that the entries are weakly increasing in each row and in each column, and there is at most one $i$ per column and one $i'$ per row, for any $i \geq 1$. 
The \emph{reading word} $w(T)$ of a shifted tableau is obtained by reading its entries from left to right, going bottom to top. 
The \emph{weight} of $T$ is defined as the weight of its word. A word or a shifted tableau are said to be \emph{standard} if their weight is $(1, \ldots, 1)$. 
Words and tableaux will be presented in \emph{canonical form}, i.e., the first occurrence of each letter $i$ or $i'$ must be unprimed 
\cite[Definition 2.1]{GLP17}. The set of shifted semistandard tableaux of shape $\lambda/\mu$, on the alphabet $[n]'$, in canonical form, is denoted by $\mathsf{ShST}(\lambda/\mu,n)$. 
For instance, the following is a shifted semistandard tableau of shape $(6,3,1)/(3,1)$, with its word and weight:
$$T=\begin{ytableau}
{} & {} & {} & 1 & 1 & 2'\\
\none & {} & 2 & 2'\\
\none & \none & 3
\end{ytableau}\qquad w(T)=322'112' \qquad \mathsf{wt}(T) = (2,3,1).$$

\subsection{Shifted evacuation and reversal}\label{ss:evac}
The shifted \emph{jeu de taquin} is defined similarly to the one for ordinary Young tableaux, with an exception for certain slides on the main diagonal (see 
\cite{Wor84}). The \emph{rectification} $\mathsf{rect}(T)$ of $T$ is the tableau obtained by applying any sequence of inner slides until a straight shape is obtained (it does not depend on the chosen sequence of slides). Two tableaux are said to be \emph{shifted Knuth equivalent} if they have the same rectification 
\cite[Theorem 6.4.17]{Wor84}. An operator on shifted tableaux that commutes with the shifted \textit{jeu de taquin} is called \emph{coplactic}. Two shifted semistandard tableaux are \emph{shifted dual equivalent} (or coplactic equivalent) if they have the same shape after applying any sequence (including the empty one) of shifted \textit{jeu de taquin} slides to both. 

Given $T\in \mathsf{SShT}(\lambda/\mu, n)$, its \emph{complement} in $[n]'$ is the tableau $\mathsf{c}_n (T)$ obtained by reflecting $T$ along the anti-diagonal in its shifted staircase shape, while complementing the entries by $i \mapsto (n-i+1)'$ and $i' \mapsto n-i+1$. Note that if $T$ is of shape $\lambda/\mu$, then $\mathsf{c}_n (T)$ is of shape $ \mu^{\vee} / \lambda^{\vee}$, and if $\mathsf{wt}(T) = (wt_1, \ldots, wt_n)$, then $\mathsf{wt}(\mathsf{c}_n(T)) = \mathsf{wt}(T)^{\mathsf{rev}} := (wt_n, \ldots, wt_1)$. Haiman 
\cite[Theorem 2.13]{Haim92} showed that, given $T \in \mathsf{ShST}(\lambda/\mu,n)$, there exists a unique tableau $T^e$, the \emph{reversal} of $T$, that is shifted Knuth equivalent to $\mathsf{c}_n (T)$ and dual equivalent to $T$. If $T$ is straight-shaped, $T^e$ is known as the \emph{evacuation} of $T$ and denoted $\mathsf{evac}(T) = \mathsf{rect} (\mathsf{c}_n (T))$. Since the operator $\mathsf{c}_n$ preserves shifted Knuth equivalence 
\cite[Lemma 7.1.4]{Wor84}, the reversal operator is the coplactic extension of evacuation, in the sense that, we may first rectify $T$, then apply the evacuation operator, and then perform outer \textit{jeu de taquin} slides, in the reverse order defined by the previous rectification, to get a tableau $T^e$ with the same shape of $T$. From 
\cite[Corollaries 2.5, 2.8 and 2.9]{Haim92}, this tableau $T^e$ is shifted dual equivalent to $T$, besides being shifted Knuth equivalent to $\mathsf{c}_n (T)$.

\subsection{Shifted tableau switching}\label{sec:switching}
In this section we recall the shifted tableau switching algorithm 
\cite{CNO17}, which will be used later in \cref{secBK} to introduce a shifted version of the Bender--Knuth involutions. Unlike the tableau switching algorithm for type $A$ tableaux 
\cite{BSS96}, the shifted version depends on the order in which the switches are performed, similarly to the infusion map 
\cite{TY09}. As remarked in \cite[Remark 8.1]{CNO17}, the output of this algorithm can be recovered by applying the semistandardization 
\cite{PY17} on the type $C$ infusion map of standardized tableaux 
\cite{TY09}, since the latter coincide with the shifted tableau switching on standard tableaux. 

We begin with the definitions of the shifted tableau switching for pairs $(A,B)$ of border strip shifted tableaux, with $B$ extending $A$, and for pairs of shifted semistandard tableaux $(S,T)$, with $T$ extending $S$. We omit most of the details and proofs, and refer to \cite{CNO17}. We write $\mathbf{i}$ when referring to the letters $i$ and $i'$ without specifying whether they are primed. Given $T \in \mathsf{ShST}(\lambda/\mu,n)$, we denote by $T^i$ the border strip obtained from $T$ considering only the letters $\{i',i\}$.
	
Given $S(\lambda/\mu)$ a double border strip, i.e., a shape not containing a subset of the form $\{(i,j),(i+1,j+1),(i+2,j+2)\}$, a \emph{shifted perforated $\mathbf{a}$-tableau} is a filling of some of the boxes of $S(\lambda/\mu)$ with letters $a$, $a'$ $\in [n]'$ such that no $a'$-boxes are south-east to any $a$-boxes, there is at most one $a$ per column and one $a'$ per row, and the main diagonal has at most one $\mathbf{a}$. Given a perforated $\mathbf{a}$-tableau $A$ and a perforated $\mathbf{b}$-tableau $B$, the pair $(A,B)$ is said to be a \emph{shifted perforated $(\mathbf{a},\mathbf{b})$-pair} of shape $\lambda/\mu$, for $S(\lambda/\mu)$ a double border strip, if $sh(A) \sqcup sh(B) = S(\lambda/\mu)$. If $A$ and $B$ are perforated tableaux, we say that $B$ \emph{extends} $A$ if $sh(B)$ extends $sh(A)$, denoting by $A\sqcup B$  the filling obtained by putting $A$ and $B$ next to each other. If $(A,B)$ is a shifted perforated $(\mathbf{a,b})$-pair, one can interchange an $\mathbf{a}$-box with a $\mathbf{b}$-box in $A \sqcup B$ subject to the moves depicted in \cref{fig:switches}, called the \emph{(shifted) switches}. A $\mathbf{a}$-box is said to be \emph{fully switched} if it can't be switched with any $\mathbf{b}$-boxes, and that $A \sqcup B$ if \emph{fully switched} if every $\mathbf{a}$-box is fully switched.

\begin{figure}[h]
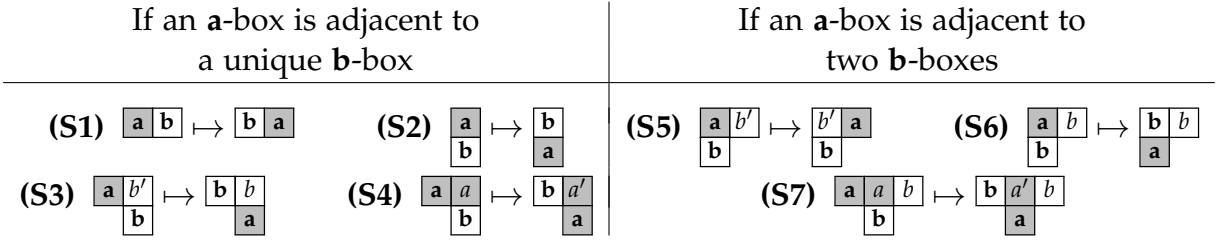

\begin{center}
\begin{tabular}{c|c}
If an $\mathbf{a}$-box is adjacent to & If an $\mathbf{a}$-box is adjacent to\\
a unique $\mathbf{b}$-box  &  two $\mathbf{b}$-boxes  \\
\hline
\\[-0.5em]
\textbf{(S1)}\; $
	\ytableausetup{smalltableaux}	
	\begin{ytableau}
    *(lightgray)\mathbf{a} & \mathbf{b}
  \end{ytableau} \mapsto
  \begin{ytableau}
  \mathbf{b} & *(lightgray)\mathbf{a}  \end{ytableau}$
  \hspace{2em}
  \textbf{(S2)}\; $\begin{ytableau}
    *(lightgray)\mathbf{a}\\
    \mathbf{b}
  \end{ytableau} \mapsto
  \begin{ytableau}
  \mathbf{b}\\
  *(lightgray)\mathbf{a}  \end{ytableau}$ & \textbf{(S5)}\; $\begin{ytableau}
    *(lightgray)\mathbf{a} & b'\\
    \mathbf{b}
  \end{ytableau} \mapsto
  \begin{ytableau}
  b' & *(lightgray)\mathbf{a} \\ \mathbf{b}  \end{ytableau}$ 
  \hspace{2em}
  \textbf{(S6)}\; $\begin{ytableau}
    *(lightgray)\mathbf{a} & b\\
    \mathbf{b}
  \end{ytableau} \mapsto
  \begin{ytableau}
  \mathbf{b} &  b \\*(lightgray)\mathbf{a}  \end{ytableau}$\\
  \\[-1em]

\textbf{(S3)}\; $\begin{ytableau}
    *(lightgray)\mathbf{a} & {b'}\\
    \none & \mathbf{b}
  \end{ytableau} \mapsto
  \begin{ytableau}
  \mathbf{b} &  b \\ \none & *(lightgray)\mathbf{a}  \end{ytableau}$
 \hspace{2em}
  \textbf{(S4)} \;$\begin{ytableau}
    *(lightgray)\mathbf{a} & *(lightgray)a\\
    \none & \mathbf{b}
  \end{ytableau} \mapsto
  \begin{ytableau}
  \mathbf{b} & *(lightgray){a'} \\
  \none & *(lightgray)\mathbf{a}  \end{ytableau}$ &
    \textbf{(S7)} \;$\begin{ytableau}
    *(lightgray)\mathbf{a} & *(lightgray){a} & b\\
    \none & \mathbf{b}
  \end{ytableau} \mapsto
  \begin{ytableau}
  \mathbf{b} & *(lightgray){a'} & b\\
  \none & *(lightgray)\mathbf{a}  \end{ytableau}$ 
\\
\end{tabular}
\end{center}
\caption{The shifted switches \cite[Section 3]{CNO17}.}
\label{fig:switches}
\end{figure}

\begin{defin}[\cite{CNO17}]
Let $T = A \sqcup B$ be a perforated $\mathbf{(a,b)}$-pair not fully switched. The \emph{shifted switching process} from $T$ to $\varsigma^m(T)$, with $m$ the least integer such that $\varsigma^m(T)$ is fully switched, is obtained as follows: choose the rightmost $a$-box in $A$ that is adjacent on the north or west to a $\mathbf{b}$-box, if it exists, otherwise, choose the bottommost $a'$-box in the same conditions, and then apply the adequate switch, obtaining $\varsigma(T)$. The process is repeated until $\varsigma^m (T)$ is fully switched, and in this case, we denote $A_B := (\varsigma^m (T))^a$ and $^A B := (\varsigma^m (T))^b$, the perforated tableaux obtained from $\varsigma^m (T)$ considering only the letters $\{a',a\}$ and $\{b',b\}$ respectively.
\end{defin}

This process is well defined and it is an involution \cite[Theorem 3.5]{CNO17}. It may be extended to pairs of shifted semistandard tableaux $(S,T)$, with $T$ extending $S$, by applying the shifted switching process sequentially to the shifted pairs $(S^m,T^1)$, $(S^m, T^2)$, $\ldots$, $(S^m,T^n)$,$\ldots$, $(S^1, T^1)$, $(S^1,T^2)$,$\ldots$, $(S^1,T^1)$, where $m$ and $n$ are the maximum entries of $S$ and $T$. This process on pairs of shifted semistandard tableaux is also well defined \cite[Theorem 3.6]{CNO17} and it is an involution \cite[Theorem 4.3]{CNO17}. Moreover, it is compatible with standardization \cite[Remark 3.8]{CNO17} and with canonical form. 

\begin{ex}
The following illustrates the shifted tableau switching on a pair of tableaux:

$$(S,T) = \begin{ytableau}
*(lgray) 1 & *(lgray) 1 & *(lgray) 1 & 1\\
\none & *(lgray) 2 & 1
\end{ytableau} \xrightarrow{\textbf{(S1)}}
\begin{ytableau}
*(lgray) 1 & *(lgray) 1 & *(lgray) 1 & 1\\
\none & 1 & *(lgray) 2
\end{ytableau} \xrightarrow{\textbf{(S1)}}
\begin{ytableau}
*(lgray) 1 & *(lgray) 1 & 1 & *(lgray) 1\\
\none & 1 & *(lgray) 2
\end{ytableau} \xrightarrow{\textbf{(S7)}}
\begin{ytableau}
1 & *(lgray) 1' & 1 & *(lgray) 1\\
\none & *(lgray) 1 & *(lgray) 2
\end{ytableau} \xrightarrow{\textbf{(S1)}}
\begin{ytableau}
1 & 1 & *(lgray) 1' & *(lgray) 1\\
\none & *(lgray) 1 & *(lgray) 2
\end{ytableau} = (^S T,S_T).
$$
\end{ex}

Another algorithm for tableaux of straight shape, that coincides with the shifted evacuation (\cref{ss:evac}), using the shifted tableau switching, is presented in \cite{CNO17}. Using an auxiliary alphabet, it applies the shifted switching process sequentially to the pairs $(T^1,T^2 \sqcup \cdots \sqcup T^n)$, $(T^2, T^3 \sqcup \cdots \sqcup T^n)$,$\ldots$, $(T^{n-1},T^n)$, for $T \in \mathsf{ShST}(\nu,n)$. This algorithm coincides with the shifted evacuation for straight-shaped tableaux \cite[Theorem 5.6]{CNO17}. It also may be modified to obtain a restriction $\mathsf{evac}_k$ to the alphabet $\{1, \ldots, k\}'$, for $k \leq n$, by applying $\mathsf{evac}$ to $T^1 \sqcup \cdots \sqcup T^k$ and maintaining $T^{k+1} \sqcup \cdots \sqcup T^n$ unchanged. Similarly to the ordinary Young tableaux case \cite[Section 5]{BSS96}, the shifted evacuation algorithms, may be extended to skew shapes, by performing the adequate shifted switching algorithms on a given skew shape. We denote these operators by $\widetilde{\mathsf{evac}}$ and $\widetilde{\mathsf{evac}}_k$. Like in type $A$, the operator $\widetilde{\mathsf{evac}}$ is different from the reversal (\cref{ss:evac}), as in general, given $T \in \mathsf{ShST}(\lambda/\mu,n)$, $\widetilde{\mathsf{evac}} (T)$ does not need to be shifted Knuth equivalent to $\mathsf{c}_n (T)$.

\section{Shifted tableau crystal and a cactus group action}\label{sec:crystal}
Gillespie, Levinson and Purbhoo \cite{GLP17} introduced a crystal-like structure on $\mathsf{ShST}(\lambda/\mu,n)$. The author shows in \cite{Ro20b} that the cactus group acts naturally on this structure via the restrictions of the shifted Schützenberger involution to primed subintervals of $[n]$.

The shifted tableau crystal consists on a crystal-like structure on $\mathsf{ShST}(\lambda/\mu,n)$, together with primed and unprimed raising and lowering operators $E_i$, $E_i'$, $F_i$ and $F_i'$, lenght functions $\varphi_i$ and $\varepsilon_i$, for each $i \in I := [n-1]$, and a weight function. For the sake of brevity, we omit these definitions and refer to the original work in \cite{GL19,GLP17}. We use the notation $\mathsf{ShST}(\lambda/\mu,n)$ for both the set and its crystal-like structure.
It may be regarded as a directed acyclic graph with weighted vertices, and $i$-coloured labelled double edges, solid ones for unprimed operators, and dashed ones for primed operators (see \cref{fig:crystal_t2}). This graph is partitioned into $i$-\emph{strings}, which are the $\{i',i\}$-connected components of $\mathsf{ShST}(\lambda/\mu,n)$, for each $i \in I$. There are two possible arrangements for these strings \cite[Section 3.1]{GL19} \cite[Section 8]{GLP17}: \emph{separated strings}, consisting of two $i$-labelled chains of equal length, connected by $i'$-labelled edges, and \emph{collapsed strings} a double chain of both $i$- and $i'$-labelled edges. Additionally, $\mathsf{ShST}(\lambda/\mu,n)$ decomposes into connected components, each one having a unique highest weight element (an element for which all primed and unprimed raising operators are undefined) corresponding to a LRS tableau, and a unique lowest weight element (defined analogously with lowering operators), the reversal of it. Thus, each of these connected components is isomorphic, via rectification, to $\mathsf{ShST}(\nu,n)$, for some strict partition $\nu$ \cite[Corollary 6.5]{GLP17}.

\subsection{The Schützenberger involution and the crystal reflection operators}
The Schützenberger or Lusztig involution is defined on the shifted tableau crystal \cite[Section 2.3.1]{GL19} in the same fashion as for type $A$ Young tableau crystal. It is realized by the shifted evacuation (for straight shapes) or the shifted reversal (for skew shapes). The shifted crystal reflection operators $\sigma_i$, for $i \in I$, were introduced in \cite{Ro20b}, using the crystal operators. They coincide with the restriction of the Schützenberger involution to the intervals of the form $\{i,i+1\}'$. 

\begin{prop}[\cite{Ro20b}]\label{defSchu}
There exists a unique map of sets $\eta: \mathsf{ShST}(\nu,n) \longrightarrow \mathsf{ShST}(\nu,n)$ that satisfies the following, for all $T\in \mathsf{ShST}(\nu,n)$ and for all $i \in I$:

1. $E'_i \eta (T) = \eta F'_{n-i} (T)$ and $E_i \eta (T) = \eta F_{n-i} (T)$.

2. $F'_i \eta (T) = \eta E'_{n-i} (T)$ and $F_i \eta (T) = \eta E_{n-i} (T)$.

3. $wt(\eta(T)) = wt(T)^{\mathsf{rev}}$.
\end{prop}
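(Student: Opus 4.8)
The plan is to establish both existence and uniqueness of the involution $\eta$ by identifying it concretely with the shifted evacuation operator $\mathsf{evac}$ on $\mathsf{ShST}(\nu,n)$, and then verifying the three listed properties. First I would set $\eta := \mathsf{evac}$ and recall from Section~\ref{ss:evac} that, for straight shapes, $\mathsf{evac}(T) = \mathsf{rect}(\mathsf{c}_n(T))$ realizes the Schützenberger involution on the crystal. The weight property~3 is then immediate: the complement $\mathsf{c}_n$ reverses the weight, $\mathsf{wt}(\mathsf{c}_n(T)) = \mathsf{wt}(T)^{\mathsf{rev}}$, and rectification preserves weight, so $\mathsf{wt}(\eta(T)) = \mathsf{wt}(T)^{\mathsf{rev}}$.

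The heart of the argument is properties~1 and~2, the intertwining of $\eta$ with the crystal operators. My approach here is to exploit the complementation map $i \mapsto (n-i+1)'$, $i' \mapsto n-i+1$ built into $\mathsf{c}_n$: this relabelling sends the letters governing the $i$-th operators to those governing the $(n-i)$-th operators, and reverses the roles of raising and lowering (since it reflects the tableau and flips the primed/unprimed structure). The cleanest route is to check these relations on highest weight elements, using the fact established in Section~\ref{sec:crystal} that each connected component is isomorphic via rectification to some $\mathsf{ShST}(\nu,n)$ with a unique highest weight vertex, and that $\eta$ exchanges highest and lowest weight elements. Since $\mathsf{evac}$ is coplactic (it commutes with the crystal structure up to the relabelling induced by $\mathsf{c}_n$), verifying that $E'_i \eta = \eta F'_{n-i}$ and $E_i\eta = \eta F_{n-i}$ on each $i$-string reduces to a local computation on separated and collapsed strings, using the two string arrangements recalled above. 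Applying $\eta$ twice recovers the identity (evacuation is an involution), which simultaneously yields the analogous relations~2 from relations~1.

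For uniqueness, I would argue that the three conditions rigidly determine $\eta$ on every connected component. Fix a connected component and its unique highest weight element $T_{\mathrm{hw}}$; conditions~1 and~2 force any such $\eta$ to send $T_{\mathrm{hw}}$ to the unique lowest weight element of the same component (since $\eta$ must turn all raising operators into lowering operators and vice versa, an element annihilated by all $E_i, E'_i$ must map to one annihilated by all $F_i, F'_i$), and condition~3 pins down its weight. From there, every other element of the component is reachable from $T_{\mathrm{hw}}$ by a sequence of lowering operators, and conditions~1--2 propagate the value of $\eta$ uniquely along each such path; well-definedness across different paths follows from the relations themselves.

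The main obstacle I anticipate is the careful handling of the primed operators and the two distinct string types: the interaction between $E_i, E'_i$ and the primed/unprimed edge structure is subtle, and the collapsed strings (where $i$- and $i'$-edges coincide on a double chain) behave differently from separated strings under the complementation $i \mapsto (n-i+1)'$. Ensuring that the relabelling in $\mathsf{c}_n$ correctly swaps the $i$-operators with the $(n-i)$-operators \emph{including} the primed/unprimed bookkeeping, rather than introducing an off-by-one or a misassignment of primes, is the delicate point that the proof must pin down precisely.
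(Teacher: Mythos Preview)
The paper does not supply its own proof of this proposition: it is quoted verbatim from \cite{Ro20b} and immediately followed by a sentence explaining that the map coincides with $\mathsf{evac}$ on straight shapes and with reversal on connected components of skew crystals. So there is nothing in the present paper to compare your argument against line by line.

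That said, your overall architecture---take $\eta=\mathsf{evac}$ for existence, and use connectedness plus the unique highest/lowest weight elements for uniqueness---is the standard one and is what the referenced proof in \cite{Ro20b} carries out. Two comments on the execution. First, your existence argument for properties~1--2 blends two strategies (``check on highest weight elements'' and ``do a local computation on each $i$-string'') without committing to either; neither alone is quite the right tool. The clean route is to use that the primed and unprimed crystal operators are \emph{coplactic} (this is a main theorem of \cite{GLP17}), so that the rectification in $\mathsf{evac}=\mathsf{rect}\circ\mathsf{c}_n$ commutes with them, and then to verify directly at the level of reading words that $\mathsf{c}_n$ intertwines $F_i,F'_i$ with $E_{n-i},E'_{n-i}$. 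The ``delicate point'' you flag about primes is exactly this word-level check, and it is where the actual work lies; the string-type dichotomy is a consequence, not the mechanism. Second, in your uniqueness argument condition~3 is not needed to pin down $\eta(T_{\mathrm{hw}})$: once conditions~1--2 force $\eta(T_{\mathrm{hw}})$ to be a lowest weight element, uniqueness of the lowest weight element in $\mathsf{ShST}(\nu,n)$ already determines it. Condition~3 is then automatic.
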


This map, called the \emph{Schützenberger or Lusztig involution}, is defined on $\mathsf{ShST}(\lambda/\mu,n)$ by extending it to its connected components. It coincides with the evacuation $\mathsf{evac}$ in $\mathsf{ShST}(\nu,n)$, and with the reversal $e$ on the connected components of $\mathsf{ShST}(\lambda/\mu,n)$. The map $\eta$ is a coplactic and a weight-reversing, shape-preserving involution. 

Let $[i,j]:=\{i\!<\! \cdots \!<\! j\}$, for $1 \leq i < j \leq n$, and let $\theta_{i,j}$ denote the longest permutation in $\mathfrak{S}_{[i,j]}$ embedded in $\mathfrak{S}_{n}$. Given $T \in \mathsf{ShST}(\lambda/\mu,n)$ and $1 \leq i < j \leq n$, let $T^{i,j} := T^i \sqcup \cdots \sqcup T^j$. We define the \emph{restriction of the Schützenberger involution} to the interval $[i,j]'$ as $\eta_{i,j} (T) := T^{1,i - 1} \sqcup \eta(T^{i,j}) \sqcup T^{j+1, n}$. In particular, we have $\eta_{1,n} = \eta$ and $\eta_{i,i+1}=\sigma_i$. The \emph{shifted crystal reflection operators} $\sigma_i$, for $i \in I$, introduced in \cite[Definition 4.3]{Ro20b} using the shifted tableau crystal operators, are involutions which coincide with $\eta_{i,i+1}$, the restriction of the Schützenberger involution to the intervals of the form $\{i,i+1\}'$. Unlike the type $A$ case, the reflection operators $\sigma_i$ do not define an action of the symmetric group $\mathfrak{S}_n$ on $\mathsf{ShST}(\lambda/\mu,n)$ as the braid relations $(\sigma_i \sigma_{i+1})^3=1$ do not need not hold.

\subsection{An action of the cactus group}\label{sec:cactusaction}
Halacheva \cite{Hala16} has shown that there is a natural action of the cactus group $J_{\mathfrak{g}}$ on any $\mathfrak{g}$-crystal, for $\mathfrak{g}$ a complex, reductive, finite-dimensional Lie algebra. In particular, the cactus group $J_n$ (corresponding to $\mathfrak{g}=\mathfrak{gl}_n$) acts internally on the type $A$ crystal of semistandard Young tableux, via the partial Schützenberger involutions, or partial evacuations for straight shapes. Following a similar approach, the author has shown in 
\cite{Ro20b} that there is a natural action of the cactus group $J_n$ on $\mathsf{ShST}(\lambda/\mu,n)$. This action is realized by the restricted shifted Scützenberger involutions $\eta_{i,j}$. 

\begin{defin}[\cite{HenKam06}]\label{def:cactus}
The \emph{$n$-fruit cactus group} $J_n$ is the free group with generators $s_{i,j}$, for $1 \leq i < j \leq n$, subject to the relations:
\begin{equation}\label{eq:cactus_rel}
s_{i,j}^2 = 1, \quad s_{i,j} s_{k,l} = s_{k,l} s_{i,j}\; \text{for} \; [i,j] \cap [k,l] = \emptyset, \quad s_{i,j}s_{k,l} = s_{i+j-l,i+j-k} s_{i,j}\; \text{for} \; [k,l] \subseteq [i,j].
\end{equation}
\end{defin}

There is an epimorphism $J_n \longrightarrow \mathfrak{S}_n$, sending $s_{i,j}$ to $\theta_{i,j}$. The kernel of this surjection is known as the \emph{pure cactus group}  (see \cite[Section 3.4]{HenKam06}). The first and third relations ensure that we may only consider generators of the form $s_{1,k}$, since any $s_{i,j}$ may be written as
\begin{equation}\label{eq:cactus_s1k}
s_{i,j} = s_{1,j} s_{1,j-i+1} s_{1,j}.
\end{equation}

\begin{teo}[{\cite[Theorem 5.7]{Ro20b}}]\label{thm:cactusaction}
There is a natural action of the $n$-fruit cactus group $J_n$ on the shifted tableau crystal $\mathsf{ShST}(\lambda/\mu,n)$ given by the group homomorphism $\phi : s_{i,j} \mapsto \eta_{i,j}$, for $1 \leq i < j \leq n$.
\end{teo}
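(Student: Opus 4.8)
The plan is to show that $\phi$ extends to a group homomorphism by verifying that the operators $\eta_{i,j}$ satisfy the three defining relations of $J_n$ in \eqref{eq:cactus_rel}: involutivity $\eta_{i,j}^2=\mathrm{id}$, commutation $\eta_{i,j}\eta_{k,l}=\eta_{k,l}\eta_{i,j}$ for $[i,j]\cap[k,l]=\emptyset$, and the reflection relation $\eta_{i,j}\eta_{k,l}=\eta_{i+j-l,\,i+j-k}\eta_{i,j}$ for $[k,l]\subseteq[i,j]$. First I would reduce to straight shapes: each connected component of $\mathsf{ShST}(\lambda/\mu,n)$ is isomorphic via rectification to some $\mathsf{ShST}(\nu,n)$, and since $\eta$, hence each $\eta_{i,j}$, is coplactic, it is transported along this isomorphism. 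Thus it suffices to verify the three relations on $\mathsf{ShST}(\nu,n)$, where $\eta$ is the evacuation and is characterized by \cref{defSchu}.

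The first two relations are immediate from the shape $\eta_{i,j}(T)=T^{1,i-1}\sqcup\eta(T^{i,j})\sqcup T^{j+1,n}$. Since $\eta$ is a shape-preserving involution (\cref{defSchu}), $\eta_{i,j}$ applies an involution to the value-block $T^{i,j}$ while leaving the cells and entries of the complementary blocks untouched, so $\eta_{i,j}^2=\mathrm{id}$. For disjoint intervals, say $j<k$, the operators $\eta_{i,j}$ and $\eta_{k,l}$ alter disjoint regions of cells, those carrying values in $\{i,\dots,j\}$ and in $\{k,\dots,l\}$ respectively; shape-preservation guarantees that applying either one leaves the block on which the other acts unchanged, so they commute.

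The reflection relation is the crux. After relabelling I may assume $[i,j]=[1,N]$, so that $\eta_{i,j}=\eta$ and the target interval is $[N+1-l,\,N+1-k]$; the claim becomes $\eta\,\eta_{k,l}\,\eta=\eta_{N+1-l,\,N+1-k}$. I would establish it by conjugation, using that $\eta$ reverses indices via $\eta E_m\eta=F_{N-m}$ (and its primed analogue) from \cref{defSchu}, together with the block reflection $\eta_{k,l}F_s\eta_{k,l}=E_{k+l-1-s}$ for $s\in\{k,\dots,l-1\}$. For an index $m\in\{N+1-l,\dots,N-k\}$ one has $N-m\in\{k,\dots,l-1\}$, whence
$$(\eta\,\eta_{k,l}\,\eta)\,E_m\,(\eta\,\eta_{k,l}\,\eta)=\eta\,\eta_{k,l}\,F_{N-m}\,\eta_{k,l}\,\eta=\eta\,E_{k+l-1-(N-m)}\,\eta=F_{2N+1-k-l-m},$$
which is exactly the index reflection $m\mapsto (N+1-l)+(N+1-k)-1-m$ performed by $\eta_{N+1-l,\,N+1-k}$. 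Running the same computation for the primed operators, and checking that the weight is reversed on the block $[N+1-l,N+1-k]$ and fixed elsewhere, supplies all the defining data of \cref{defSchu} applied to that block; uniqueness then forces the two maps to coincide.

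The main obstacle is making this conjugation argument genuinely rigorous for the shifted crystal rather than merely formal. Three points need care: the boundary indices $m=k-1$ and $m=l$, where $\eta_{k,l}$ neither commutes cleanly nor reflects, must be treated using the separated/collapsed string dichotomy of the shifted $i$-strings; the reflection identity must be verified simultaneously for the primed operators $E'_m,F'_m$, since the shifted crystal carries both; and the uniqueness in \cref{defSchu} must be invoked on a value-block, which requires knowing that the restriction of $\mathsf{ShST}(\nu,n)$ to a subalphabet is again a shifted tableau crystal to which the proposition applies. Once these are settled, all three relations of \eqref{eq:cactus_rel} hold for the $\eta_{i,j}$, so $\phi$ is a well-defined homomorphism and $J_n$ acts on $\mathsf{ShST}(\lambda/\mu,n)$ as claimed.
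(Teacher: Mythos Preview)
The paper does not prove this theorem; it is stated with a citation to \cite[Theorem 5.7]{Ro20b} and no argument is given in this extended abstract, so there is no in-paper proof to compare your proposal against.

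That said, your outline is the natural one and your conjugation calculation for the nested relation is correct: with $[i,j]=[1,N]$ and $m\in[N+1-l,\,N-k]$ one indeed gets $(\eta\,\eta_{k,l}\,\eta)E_m(\eta\,\eta_{k,l}\,\eta)=F_{2N+1-k-l-m}$, matching the index reflection of $\eta_{N+1-l,\,N+1-k}$. Your reduction to straight shapes via coplacticity is also the right move. The gaps you flag are real, however, and one of them is more serious than you indicate. To invoke the uniqueness of \cref{defSchu} on a value-block you need the conjugation identity not only for $m$ in the interior $[N+1-l,\,N-k]$ but also that $\eta\,\eta_{k,l}\,\eta$ \emph{commutes} with $E_m,F_m,E'_m,F'_m$ for all $m$ outside that range, including the delicate boundary indices $m=N-l$ and $m=N+1-k$ where the operator straddles the block. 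Checking commutation there forces you to understand how $\eta_{k,l}$ interacts with an $m$-string that crosses into the block on one side, and in the shifted setting the separated versus collapsed dichotomy makes this genuinely nontrivial; it is not a bookkeeping matter. There is also a mild circularity in your reduction: after restricting to the alphabet $[i,j]$ the tableau $T^{i,j}$ has skew shape, so you must again pass to connected components and rectify before \cref{defSchu} applies. None of this is fatal, but a complete proof along your lines would need these points handled carefully, as they are in the cited paper.
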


Recall that $\mathsf{evac}_j (T) = \mathsf{evac} (T^{1,j}) \sqcup T^{j+1,n}$, for $T \in \mathsf{ShST}(\nu,n)$.  As a consequence of $\phi$ being an homomorphism, we have the next result.

\begin{cor}[{\cite[Corollary 5.8]{Ro20b}}]\label{cor:eta_cactus}
Let $T \in \mathsf{ShST}(\lambda/\mu,n)$ and $1 \leq i < j \leq n$. Then, $\eta_{i,j} (T) = \eta_{1,j} \eta_{1,j-i+1} \eta_{1,j} (T)$. In particular, for $T$ a straight-shaped tableau, we have 
$\eta_{i,j}(T) = \mathsf{evac}_{j} \mathsf{evac}_{j-i+1} \mathsf{evac}_{j} (T).$
\end{cor}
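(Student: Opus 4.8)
The plan is to deduce the identity directly from the homomorphism property of the cactus action in \cref{thm:cactusaction}, rather than by unfolding the definition of $\eta_{i,j}$. First I would recall from \eqref{eq:cactus_s1k} that in $J_n$ every generator factors as $s_{i,j} = s_{1,j}\, s_{1,j-i+1}\, s_{1,j}$, a consequence of the first and third relations in \eqref{eq:cactus_rel}. Applying the group homomorphism $\phi : s_{i,j} \mapsto \eta_{i,j}$ of \cref{thm:cactusaction} to both sides, and using that $\phi$ respects products, gives
\[
\eta_{i,j} = \phi(s_{i,j}) = \phi(s_{1,j})\,\phi(s_{1,j-i+1})\,\phi(s_{1,j}) = \eta_{1,j}\, \eta_{1,j-i+1}\, \eta_{1,j},
\]
as operators on $\mathsf{ShST}(\lambda/\mu,n)$. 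This establishes the first assertion for arbitrary, possibly skew, shapes.

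For the second assertion I would specialize to a straight-shaped $T \in \mathsf{ShST}(\nu,n)$ and use the identification of $\eta_{1,k}$ with $\mathsf{evac}_k$. By definition $\eta_{1,k}(T) = \eta(T^{1,k}) \sqcup T^{k+1,n}$; since $T^{1,k}$ is straight-shaped whenever $T$ is, the Schützenberger involution $\eta$ acts on it as the evacuation $\mathsf{evac}$, as recalled after \cref{defSchu}. Comparing with $\mathsf{evac}_k(T) = \mathsf{evac}(T^{1,k}) \sqcup T^{k+1,n}$ then yields $\eta_{1,k}(T) = \mathsf{evac}_k(T)$ for every $k \leq n$.

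The one point requiring care is that this identification must survive composition of the three factors. Here I would invoke that $\eta$ is shape-preserving (stated after \cref{defSchu}): since $\eta_{1,j}(T)$ is again straight-shaped, the operator $\eta_{1,j-i+1}$ applied to it again coincides with $\mathsf{evac}_{j-i+1}$, and likewise for the outer $\eta_{1,j}$. Substituting $\eta_{1,k} = \mathsf{evac}_k$ at each factor gives $\eta_{i,j}(T) = \mathsf{evac}_j\, \mathsf{evac}_{j-i+1}\, \mathsf{evac}_j(T)$, as required. I expect no serious obstacle beyond this bookkeeping, since the nontrivial structural input—namely that $\phi$ is a homomorphism and that $\eta$ restricts to $\mathsf{evac}$ on straight shapes—is already available, and the argument reduces to transporting the cactus relation \eqref{eq:cactus_s1k} through $\phi$.
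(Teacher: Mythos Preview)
Your proposal is correct and follows exactly the approach the paper indicates: the paper introduces \cref{cor:eta_cactus} with the sentence ``As a consequence of $\phi$ being an homomorphism, we have the next result,'' and your argument spells this out by transporting the factorization \eqref{eq:cactus_s1k} through $\phi$, then specializing via $\eta_{1,k}=\mathsf{evac}_k$ on straight shapes. The extra care you take with the shape-preservation under composition is appropriate and does not deviate from the paper's intended reasoning.
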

The identities \eqref{eq:cactus_s1k} and $\eta_{1,i}=\mathsf{evac}_i$ on straight-shaped tableaux incur the next result.

\begin{cor}\label{cor:cact_evaci}
There is a natural action of the $n$-fruit cactus group on the shifted tableau crystal $\mathsf{ShST}(\nu,n)$, given by the group homomorphism $s_{1,i} \mapsto \mathsf{evac}_i$, for $i \in I$.
\end{cor}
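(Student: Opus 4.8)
The plan is to deduce this immediately from Theorem~\ref{thm:cactusaction} and the identity $\eta_{1,i}=\mathsf{evac}_i$ on straight-shaped tableaux, by restricting the homomorphism $\phi$ to the generating subset $\{s_{1,i}\}$ of $J_n$. Since all the relevant content has already been assembled in the preceding results, the argument will amount to recording how $\phi$ acts on these particular generators once the ambient set has been cut down to $\mathsf{ShST}(\nu,n)$.

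First I would note, as observed after Definition~\ref{def:cactus}, that the first and third relations in \eqref{eq:cactus_rel} yield the decomposition \eqref{eq:cactus_s1k}, namely $s_{i,j}=s_{1,j}\,s_{1,j-i+1}\,s_{1,j}$; hence the elements $s_{1,i}$ alone generate $J_n$, and any homomorphism out of $J_n$ is both determined by, and may be presented through, its values on them. Next I would invoke Theorem~\ref{thm:cactusaction}: the assignment $\phi\colon s_{i,j}\mapsto\eta_{i,j}$ is a group homomorphism realizing an action of $J_n$ on $\mathsf{ShST}(\lambda/\mu,n)$. Because $\eta$, and therefore each restricted involution $\eta_{i,j}$, is shape-preserving, every $\eta_{i,j}$ maps $\mathsf{ShST}(\nu,n)$ to itself, so $\phi$ restricts to a homomorphism governing an action on the straight-shaped crystal. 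Finally, evaluating this restriction on the generators and using $\eta_{1,i}=\mathsf{evac}_i$ from Corollary~\ref{cor:eta_cactus} gives $s_{1,i}\mapsto\eta_{1,i}=\mathsf{evac}_i$; thus the map $s_{1,i}\mapsto\mathsf{evac}_i$ is simply $\phi$, restricted to straight shapes and read off on the generating set, and so it defines the asserted action.

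I expect no genuine obstacle: the cactus relations need not be re-verified for $s_{1,i}\mapsto\mathsf{evac}_i$, since they are inherited from the already-established homomorphism $\phi$. The only points deserving a line of justification are that the restriction to $\mathsf{ShST}(\nu,n)$ is legitimate — which follows from each $\eta_{i,j}$ being shape-preserving — and that \eqref{eq:cactus_s1k} indeed makes the $s_{1,i}$ a generating set, so that fixing their images pins down the entire action. For consistency one may also remark that combining $s_{i,j}=s_{1,j}s_{1,j-i+1}s_{1,j}$ with $s_{1,k}\mapsto\mathsf{evac}_k$ reproduces precisely the formula $\eta_{i,j}=\mathsf{evac}_j\,\mathsf{evac}_{j-i+1}\,\mathsf{evac}_j$ of Corollary~\ref{cor:eta_cactus}.
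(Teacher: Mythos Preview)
Your proposal is correct and follows essentially the same route as the paper: the paper derives this corollary in one line from the identity \eqref{eq:cactus_s1k}, which shows the $s_{1,i}$ generate $J_n$, together with $\eta_{1,i}=\mathsf{evac}_i$ on straight-shaped tableaux, which is precisely the restriction of the homomorphism $\phi$ from Theorem~\ref{thm:cactusaction} that you describe. The only cosmetic remark is that the identity $\eta_{1,i}=\mathsf{evac}_i$ is stated in the text just before Corollary~\ref{cor:eta_cactus} rather than inside it, so you might cite that sentence directly.
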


\section{A shifted Berenstein--Kirillov group}\label{secBK}
We use the shifted Bender--Knuth operators to introduce a shifted Berenstein--Kirillov group. Following the works of Halacheva 
\cite{Hala16,Hala20} and Chmutov, Glick and Pylyavskyy \cite{CGP16}, we then show that this group is isomorphic to a quotient of the cactus group and exhibit an alternative presentation for the cactus group. 

\subsection{Shifted Bender--Knuth involutions}	
We now introduce the \emph{shifted Bender--Knuth moves} $\mathsf{t}_i$. 
These operators differ from the ones introduced by Stembridge \cite[Section 6]{Stem90}, which are not compatible with canonical form.
We first fix some notation. Given $i \in I = [n-1]$, recall that $\theta_i \in \mathfrak{S}_n$ denotes the simple transposition $(i,i+1)$. We extend   $\theta_i$ to $[n]'$ by putting $\theta_i( x') := (\theta_i (x))'$, for $x\in [n]$. 

\begin{defin}\label{def:sbk_mov}
Given $T \in \mathsf{ShST}(\lambda/\mu,n)$ and $i \in I$, we define the \emph{shifted Bender--Knuth move} $\mathsf{t}_i$ as  
$\mathsf{t}_i (T) := \theta_i \circ \mathsf{SP}_{i,i+1} (T),$
where $\mathsf{SP}_{i,i+1} (T)$ is obtained from $T$ by applying the shifted tableau switching to the pair $(T^i,T^{i+1})$, leaving the remaining letters unchanged.
\end{defin}

\begin{ex}
Let $T = \begin{ytableau}
1 & 1 & 2' & 2\\
\none & 2 & 3'\\
\none & \none & 3
\end{ytableau}$. Then, we have:

$$\begin{ytableau}
*(lgray)1 & *(lgray)1 & 2' & 2\\
\none & 2 & {\color{gray} 3'}\\
\none & \none & {\color{gray} 3}
\end{ytableau} \xrightarrow{\textbf{(S5)}}
\begin{ytableau}
*(lgray)1 & 2' & *(lgray)1 & 2\\
\none & 2 & {\color{gray} 3'}\\
\none & \none & {\color{gray} 3}
\end{ytableau} \xrightarrow{\textbf{(S1)}}
\begin{ytableau}
*(lgray)1 & 2' & 2 & *(lgray)1\\
\none & 2 & {\color{gray} 3'}\\
\none & \none & {\color{gray} 3}
\end{ytableau} \xrightarrow{\textbf{(S3)}}
\begin{ytableau}
2 & 2 & 2 & *(lgray)1\\
\none & *(lgray)1 & {\color{gray} 3'}\\
\none & \none & {\color{gray} 3}
\end{ytableau} \xrightarrow{\theta_1}
\begin{ytableau}
1 & 1 & 1 & 2\\
\none & 2 & 3'\\
\none & \none & 3
\end{ytableau} = \mathsf{t}_1 (T).
$$

$$\begin{ytableau}
{\color{gray} 1} & {\color{gray} 1} & *(lgray)2' & *(lgray)2\\
\none & *(lgray)2 & 3'\\
\none & \none & 3
\end{ytableau} \xrightarrow{\textbf{(S3)}}
\begin{ytableau}
{\color{gray} 1} & {\color{gray} 1} & *(lgray)2' & *(lgray)2\\
\none & 3 & 3\\
\none & \none & *(lgray) 2
\end{ytableau} \xrightarrow{\textbf{(S2)}}
\begin{ytableau}
{\color{gray} 1} & {\color{gray} 1} & 3 & *(lgray)2\\
\none & 3 & *(lgray)2'\\
\none & \none & *(lgray) 2
\end{ytableau} \xrightarrow{\theta_2}
\begin{ytableau}
1 & 1 & 2 & 3\\
\none & 2 & 3'\\
\none & \none & 3
\end{ytableau} = \mathsf{t}_2(T).
$$
\end{ex}

\begin{figure}[h]
\begin{center}
\includegraphics[scale=0.52]{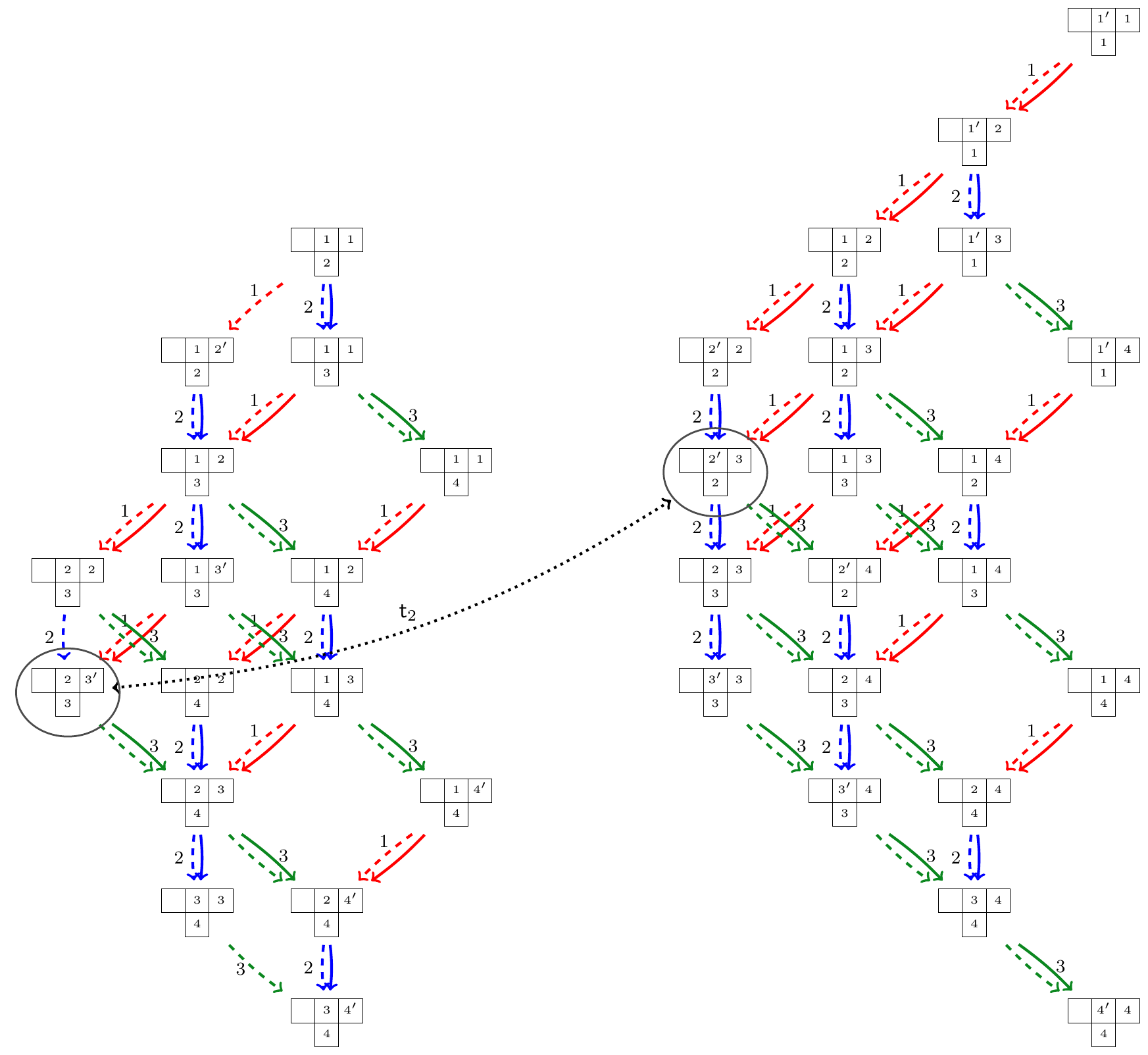}
\end{center}
\caption{An example of the action of $\mathsf{t}_2$ on the shifted tableau crystal graph $\mathsf{ShST}((3,1)/(1),4)$, which has two connected components.}
\label{fig:crystal_t2}
\end{figure}

The operators $\mathsf{t}_i$ satisfy $\mathsf{t}_i^2=1$ and $\mathsf{t}_i \mathsf{t}_j = \mathsf{t}_j \mathsf{t}_i$, for $|i-j|>1$, and they act via $\theta_i$ on the weight of a tableau. 
Like the case in type $A$, they are not coplactic, and, in general, $\mathsf{t}_i \neq \sigma_i$ (although $\mathsf{t}_1$ and $\sigma_1$ coincide on straight-shaped tableaux). Moreover, $\mathsf{t}_i (T)$ does not need to be in the same $i$-string as $T$ (see \cref{fig:crystal_t2}). We define the \emph{shifted promotion operator} $\mathsf{p}_i$ as $\mathsf{p}_i (T) := \mathsf{t}_i \mathsf{t}_{i-1} \cdots \mathsf{t}_1 (T)$, for $T \in \mathsf{ShST}(\lambda/\mu,n)$ and $i \in I$.

\begin{prop}\label{evacpi}
Given $T \in \mathsf{ShST}(\lambda/\mu,n)$ and $i \in I$, we have $\widetilde{\mathsf{evac}}_{i+1} (T) = \mathsf{p}_1 \mathsf{p}_2 \cdots \mathsf{p}_i (T).$ In particular, for $T \in \mathsf{ShST}(\nu,n)$ we have $\eta_{1,i+1}(T)=\mathsf{evac}_{i+1} (T) = \mathsf{p}_1 \mathsf{p}_2 \cdots \mathsf{p}_i (T)$.
\end{prop}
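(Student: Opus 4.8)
The plan is to induct on $i$, reducing the whole statement to the single-step recursion $\widetilde{\mathsf{evac}}_{i+1} = \widetilde{\mathsf{evac}}_i \circ \mathsf{p}_i$. Granting this recursion, the inductive hypothesis $\widetilde{\mathsf{evac}}_i = \mathsf{p}_1 \cdots \mathsf{p}_{i-1}$ (a composite involving only $\mathsf{t}_1, \ldots, \mathsf{t}_{i-1}$, hence fixing the letters $i+1$ and above) gives at once $\widetilde{\mathsf{evac}}_{i+1} = (\mathsf{p}_1 \cdots \mathsf{p}_{i-1}) \circ \mathsf{p}_i = \mathsf{p}_1 \cdots \mathsf{p}_i$. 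For the base case $i=1$, over the alphabet $\{1,2\}'$ the switching description of evacuation in \cref{sec:switching} consists of a single switch of the pair $(T^1,T^2)$ followed by the auxiliary relabeling, which for two letters is exactly the value-swap $\theta_1$; thus $\widetilde{\mathsf{evac}}_2 = \theta_1 \circ \mathsf{SP}_{1,2} = \mathsf{t}_1 = \mathsf{p}_1$.

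The heart of the argument is to identify $\mathsf{p}_i$ with the \emph{first round} of the shifted switching evacuation algorithm of \cite{CNO17} on $\{1, \ldots, i+1\}'$. Writing $\mathsf{t}_k = \theta_k \circ \mathsf{SP}_{k,k+1}$, the composite $\mathsf{p}_i = \mathsf{t}_i \cdots \mathsf{t}_1$ fully switches the content of $T^1$ past $T^2$, then past $T^3$, and so on up to $T^{i+1}$, each intermediate $\theta_k$ serving only to keep the moving strip labeled just below the next strip it must cross, and leaving each crossed strip relabeled $\mathbf{k} \mapsto \mathbf{k\!-\!1}$ and untouched thereafter. I would show these are the \emph{same} geometric switches as in the first round of $\widetilde{\mathsf{evac}}_{i+1}$, which by the definition of the shifted switching on semistandard pairs processes $(T^1,T^{(2)}),(T^1,T^{(3)}),\ldots$ in increasing order, so that the original $\mathbf 1$-content crosses each letter exactly once in the same order. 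The key point is that every switch in \cref{fig:switches} depends only on the relative order of the two letters and on their primed/unprimed status, never on their actual values; since each $\theta_k$ preserves primes, switching the relabeled moving strip reproduces, box for box, the relabeling of the switch of the original $\mathbf 1$-content. Hence after $\mathsf{p}_i$ the old $\mathbf 1$-content sits at the outer boundary carrying label $i+1$, while each original $\mathbf k$ ($2 \le k \le i+1$) occupies exactly the cell it would after the first round of $\widetilde{\mathsf{evac}}_{i+1}$, now bearing label $k-1$.

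It then remains to match the later rounds. Since $\mathsf{p}_1 \cdots \mathsf{p}_{i-1}$ fixes the label $i+1$, applying $\widetilde{\mathsf{evac}}_i$ to $\mathsf{p}_i(T)$ switches only the original $\mathbf 2, \ldots, \mathbf{i\!+\!1}$ content among themselves, and by the same value-independence these reproduce, strip for strip and in the same order, rounds $2, \ldots, i$ of $\widetilde{\mathsf{evac}}_{i+1}$. A weight check confirms the labels: the cyclic shift effected by $\mathsf{p}_i$ followed by the reversal effected by $\widetilde{\mathsf{evac}}_i$ sends each original $\mathbf k$ to label $i+2-k$, which is precisely the content reversal realized by $\widetilde{\mathsf{evac}}_{i+1}$. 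As both outputs lie in canonical form, they coincide, proving the recursion. The \emph{in particular} clause follows since on straight shapes $\widetilde{\mathsf{evac}}_{i+1} = \mathsf{evac}_{i+1} = \eta_{1,i+1}$.

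The hard part will be the equivariance of the shifted switching under the relabelings $\theta_k$ — that switching commutes with relabeling — together with reconciling the order-dependence of the shifted process between the single-alphabet evacuation round and the relabel-interspersed promotion. Unlike type $A$, the switches \textbf{(S3)}, \textbf{(S4)}, \textbf{(S7)} actively create primed or unprimed letters, so one must verify that these outputs transform correctly under each $\theta_k$ and that canonical form survives every intermediate step. Here I would lean on the compatibility of the shifted tableau switching with standardization and with canonical form recorded in \cref{sec:switching}, reducing the verification to the standard case, where the switches coincide with the type $C$ infusion and the value-independence is transparent.
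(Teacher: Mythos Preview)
The paper is an extended abstract and states this proposition without proof, so there is no argument to compare against directly. Your inductive strategy via the recursion $\widetilde{\mathsf{evac}}_{i+1} = \widetilde{\mathsf{evac}}_i \circ \mathsf{p}_i$, identifying $\mathsf{p}_i$ with the first round of the switching evacuation algorithm of \cite{CNO17}, is the natural and expected one, exactly parallel to the type~$A$ argument in \cite{BSS96}. The key technical point you isolate---that each switch in \cref{fig:switches} depends only on the relative order of the two letters and their primed/unprimed status, so that switching commutes with the prime-preserving relabelings $\theta_k$---is correct by inspection and is what makes the bookkeeping go through; together with the compatibility of the shifted tableau switching with canonical form \cite[Theorem~3.6]{CNO17} this suffices. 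Your fallback reduction to the standard case via standardization \cite[Remark~3.8]{CNO17} is a legitimate alternative route and indeed makes the value-independence transparent, though it is not strictly needed once the former observation is in hand.
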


Similar to $\eta_{i,j}$, we may define the restriction of $\widetilde{\mathsf{evac}}$ to the interval $[i,j]'$ by $\widetilde{\mathsf{evac}}_{i,j}(T):= T^{1,i-1} \sqcup \widetilde{\mathsf{evac}}(T^{i,j}) \sqcup  T^{j+1,n}$. However, these operators do not need to satisfy the relation $\widetilde{\mathsf{evac}}_{i,j}=\widetilde{\mathsf{evac}}_{j} \widetilde{\mathsf{evac}}_{j-i+1} \widetilde{\mathsf{evac}}_{j}$, unlike the operators $\eta_{i,j}$ (see \cref{cor:eta_cactus}).

\subsection{The Berenstein--Kirillov group}	

The Bender--Knuth moves $t_i$, for $i \in I$, are involutions on semistandard Young tableaux filled in $[n]$, that act only on the letters $\{i,i+1\}$, reverting their weight 
\cite{BeKn72}. They are known to coincide with the tableau switching on type $A$ on two consecutive letters 
\cite{BSS96}. The \emph{Berenstein--Kirillov group} $\mathcal{BK}$ (or \emph{Gelfand--Tsetlin group}), is the free group generated by these involutions $t_i$, for $i > 0$, modulo the relations they satisfy on semistandard Young tableaux of any shape 
\cite{BK16, BK95, CGP16}. The following are some of the relations known to hold in $\mathcal{BK}$ \cite[Corollary 1.1]{BK95}
$$t_i^2 = 1, \qquad t_i t_j = t_j t_i,\; \text{for}\; |i-j|>1,\qquad (t_1 t_2)^6 = 1, \qquad (t_1 q_i)^4 = 1,\; \text{for}\; i > 2,$$
where $q_{i} := t_1 (t_2 t_1) \cdots (t_i t_{i-1} \cdots t_1)$, for $i \geq 1$, are involutions. Let $q_{k,j} := q_{j-1} q_{j-k} q_{j-1}$, for $k<j$. In particular, $q_i = q_{1,i+1}$. Another relation was found in 
\cite[Theorem 1.6]{CGP16}, which generalizes the last one:
$$(t_i q_{j,k})^2 = 1, \;\text{for}\; i+1<j<k.$$
Let $\mathcal{BK}_n$ be the subgroup of $\mathcal{BK}$ generated by $t_1, \ldots, t_{n-1}$. The involutions $q_i$, for $i \in I$, provide another set of generators, and their action on straight-shaped Young tableaux coincide with the one of the restriction of the Schützenberger involution to $[i+1]$ \cite[Remark 1.3]{BK95}. It was shown in 
\cite{CGP16}, using semistandard growth diagrams, that $\mathcal{BK}_n$ is isomorphic to a quotient of the cactus group. This result could also be derived by noting the coincidence of the actions of $J_n$ 
\cite{Hala16} and $\mathcal{BK}_n$ on straight-shaped semistandard Young tableaux as noted in \cite[Remark 3.9]{Hala20}.

\begin{teo}
The group $\mathcal{BK}_n$ is isomorphic to a quotient of $J_n$, as a result of the following being group epimorphisms from $J_n$ to $\mathcal{BK}_n$:

1. $s_{i,j} \mapsto q_{i,j}$ \cite[Theorem 1.4]{CGP16}.

2. $s_{1,j} \mapsto q_{j-1}$ \cite[Remark 1.3]{BK95}, \cite[Section 10.2]{Hala16}, \cite[Remark 3.9]{Hala20}.
\end{teo}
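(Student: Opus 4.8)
The plan is to use that, by the first isomorphism theorem, the statement ``$\mathcal{BK}_n$ is isomorphic to a quotient of $J_n$'' is equivalent to the existence of a surjective group homomorphism $J_n \twoheadrightarrow \mathcal{BK}_n$. So for each of the two prescribed assignments it suffices to establish (a) that it extends to a well-defined homomorphism and (b) that it is surjective. Surjectivity is immediate: the involutions $q_1, \ldots, q_{n-1}$ form a generating set of $\mathcal{BK}_n$, and $q_i = q_{1,i+1}$ is the image of $s_{1,i+1}$ under either assignment, so each image already contains a full generating set. Moreover, by \eqref{eq:cactus_s1k} together with $q_{i,j} = q_{j-1}q_{j-i}q_{j-1}$, the two assignments agree on the generators $s_{1,j}$, which generate $J_n$; hence they coincide as soon as one of them is shown to be well defined, and I would concentrate on well-definedness.

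For well-definedness I must check that the images of the generators satisfy the three families of cactus relations \eqref{eq:cactus_rel}, now read as identities among words in the $t_i$ that must hold on $\mathsf{SSYT}(\nu, n)$ for every (straight) shape $\nu$. The involutivity $s_{i,j}^2 = 1$ transfers to $q_{i,j}^2 = 1$ at once, since $q_{i,j} = q_{j-1}q_{j-i}q_{j-1}$ is a conjugate of the involution $q_{j-i}$ by the involution $q_{j-1}$. The disjoint-support commutation $s_{i,j}s_{k,l} = s_{k,l}s_{i,j}$ for $[i,j] \cap [k,l] = \emptyset$ has as its basic instance the relation $(t_i q_{j,k})^2 = 1$ for $i+1 < j < k$ of \cite[Theorem 1.6]{CGP16}, which (using the involutivity just checked) is exactly the commutation of $s_{i,i+1}$ with $s_{j,k}$; the remaining instances are part of the relation set verified in \cite[Theorem 1.4]{CGP16}.

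The crux, and the main obstacle, is the nesting relation $s_{i,j}s_{k,l} = s_{i+j-l,\,i+j-k}\,s_{i,j}$ for $[k,l] \subseteq [i,j]$, that is, $q_{i,j}q_{k,l} = q_{i+j-l,\,i+j-k}\,q_{i,j}$ on every $\mathsf{SSYT}(\nu,n)$. I see two routes, matching the two parts of the statement. Following Chmutov--Glick--Pylyavskyy, I would verify it with semistandard growth diagrams, tracking how conjugation by the partial Schützenberger involution $q_{i,j}$ reflects the inner interval $[k,l]$ onto $[i+j-l,\,i+j-k]$; the definition $q_{i,j} = q_{j-1}q_{j-i}q_{j-1}$ already displays this index reflection, the work lying in certifying the identity shape by shape. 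Alternatively, following Halacheva, I would realize it on crystals: the cactus group acts on $\mathsf{SSYT}(\nu,n)$ by $s_{1,j} \mapsto \mathsf{evac}_j$, the partial evacuation given by the restriction of the Schützenberger involution to $[j]$ \cite{Hala16,Hala20}, while $q_{j-1}$ acts precisely as $\mathsf{evac}_j$ by \cite[Remark 1.3]{BK95}; hence the $q_{j-1}$ satisfy the cactus relations on each straight-shaped crystal, and since the relations defining $\mathcal{BK}_n$ range over exactly these shapes, the relations hold in $\mathcal{BK}_n$. Either route supplies well-definedness, and together with surjectivity this yields the two epimorphisms and the theorem.
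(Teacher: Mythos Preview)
The paper does not supply its own proof of this theorem: it is stated as a known result, with the two items attributed respectively to \cite{CGP16} and to \cite{BK95,Hala16,Hala20}, and the surrounding text only indicates the mechanisms (semistandard growth diagrams for the first, coincidence of the $J_n$- and $\mathcal{BK}_n$-actions on straight-shaped crystals for the second). Your outline is faithful to both of these mechanisms and to the logical structure the paper later mirrors in the shifted setting (\cref{teo:cact_sbk}): reduce to the generators $s_{1,j}$ via \eqref{eq:cactus_s1k}, identify their images with the $q_{j-1}=\mathsf{evac}_j$, deduce surjectivity from the fact that the $q_i$ generate $\mathcal{BK}_n$, and obtain well-definedness from the cactus relations holding among the $q_{i,j}$.

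One small imprecision worth flagging: you write that ``the relations defining $\mathcal{BK}_n$ range over exactly these shapes'' (i.e.\ straight shapes). As the paper recalls, $\mathcal{BK}$ is by definition the free group on the $t_i$ modulo the relations they satisfy on tableaux of \emph{any} shape, skew included. The Halacheva route therefore needs either that the cactus relations among the $q_{i,j}$ hold on skew shapes as well, or the (true, but nontrivial) fact that a word in the $t_i$ acting trivially on every straight-shaped family already represents the identity in $\mathcal{BK}_n$. In the cited works this is handled, so your sketch is correct in spirit; just be aware that ``checking on each $\mathsf{SSYT}(\nu,n)$'' is not literally the definition of a relation in $\mathcal{BK}_n$.
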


Chmutov, Glick and Pylyavskyy \cite{CGP16} established an equivalence between relations that are satisfied in $\mathcal{BK}_n$ and the ones of the cactus group $J_n$ \eqref{eq:cactus_rel}, thus obtaining an alternative presentation for the latter via the Bender--Knuth involutions. 

\begin{teo}[{\cite[Theorem 1.8]{CGP16}}]\label{thm:rel_cact_bk}
The relations 
$$
t_i^2 = 1, \qquad t_i t_j = t_j t_i, \;\text{for}\; |i-j| > 1, \qquad (t_i q_{k-1} q_{k-j} q_{k-1})^2 = 1, \;\text{for}\; i+1<j<k,
$$
where $q_i := t_1 (t_2 t_1) \cdots (t_i t_{i-1} \cdots t_1)$, are equivalent to the cactus group relations \eqref{eq:cactus_rel} satisfied by the maps $q_{i,j}$
$$
q_{i,j}^2 = 1, \qquad q_{i,j}q_{k,l} = q_{i+j-l,i+j-k} q_{i,j}, \;\text{for}\; i \leq k < l \leq j, \qquad q_{i,j} q_{k,l} = q_{k,l} q_{i,j}, \;\text{for}\; j <k.
$$
\end{teo}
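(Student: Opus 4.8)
The plan is to prove the asserted equivalence as an isomorphism of two presentations of the same group, using the mutually inverse substitutions furnished by the dictionary $q_m = q_{1,m+1}$, $q_{i,j} = q_{j-1}q_{j-i}q_{j-1}$, and its inverse $t_i = q_{i-1}q_iq_{i-1}q_{i-2}$ (with $q_0 := 1$), the latter obtained from the recursion $q_i = q_{i-1}(t_it_{i-1}\cdots t_1)$ together with $q_m^2=1$. First I would record the background facts available from $t_i^2=1$ and far-commutativity alone: each $q_m$ is an involution (as already noted for $\mathcal{BK}$), so $q_{i,j}^2 = q_{j-1}q_{j-i}q_{j-1}q_{j-1}q_{j-i}q_{j-1}=1$ is immediate and the first cactus relation is automatic. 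The crucial reformulation is that, given $t_i^2=q_{j,k}^2=1$, the relation $(t_iq_{j,k})^2=1$ is equivalent to the commutation $t_iq_{j,k}=q_{j,k}t_i$; this exhibits the third relation of the $t$-presentation as a \emph{disjointness} commutation, valid exactly when $i+1<j$, i.e.\ when the support $\{i,i+1\}$ of $t_i$ is disjoint from the interval $[j,k]$ underlying $q_{j,k}$.

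With these reductions the theorem splits into two well-definedness checks. For the forward direction I would assume the $t$-relations and verify that the words $q_{i,j}=q_{j-1}q_{j-i}q_{j-1}$ satisfy all three cactus relations: involutivity is the background fact above; the disjoint commutation $q_{i,j}q_{k,l}=q_{k,l}q_{i,j}$ for $j<k$ reduces, after expanding both sides in the $q_m$ and re-expressing the relevant $q_m$ through the $t$'s, to repeated application of the mixed commutations $t_iq_{j,k}=q_{j,k}t_i$; and the nesting relation $q_{i,j}q_{k,l}=q_{i+j-l,i+j-k}q_{i,j}$ for $i\le k<l\le j$ is obtained by conjugating the reversal $q_{k,l}$ by the full reversal $q_{1,j}=q_{j-1}$ and tracking how $[k,l]$ is reflected onto $[i+j-l,i+j-k]$. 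For the converse I would work inside $J_n$, substitute $t_i=q_{i-1}q_iq_{i-1}q_{i-2}$ with $q_m=s_{1,m+1}$, and verify $t_i^2=1$, the far-commutativity $t_it_j=t_jt_i$ for $|i-j|>1$, and $(t_iq_{j,k})^2=1$ by direct cactus-group manipulation using the three relations of the cactus presentation; a final check that the two substitutions are mutually inverse on generators (which follows from the consistency of the dictionary and $q_m^2=1$) completes the isomorphism.

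The main obstacle I anticipate is the nesting relation, together with the impossibility of importing the symmetric-group argument. In $\mathfrak{S}_n$ one has the clean conjugation $\theta_{1,j}\theta_k\theta_{1,j}=\theta_{j-k}$, which would render every relation transparent; but its $\mathcal{BK}$ analogue $q_{j-1}t_kq_{j-1}=t_{j-k}$ \emph{fails}, since already $q_2t_1q_2=t_2$ would force the braid relation $(t_1t_2)^3=1$, whereas only $(t_1t_2)^6=1$ holds. Consequently the $\mathfrak{S}_n$ proof cannot be lifted, and one is forced to use exactly the weaker commutations $(t_iq_{j,k})^2=1$ available in the $t$-presentation; the delicate part is the bookkeeping that rewrites a nested reversal $q_{k,l}$, conjugated by $q_{1,j}$, as the reflected reversal $q_{i+j-l,i+j-k}$ purely through these commutations and the defining products $q_{i,j}=q_{j-1}q_{j-i}q_{j-1}$, without ever invoking a simple-reflection conjugation. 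Isolating the minimal set of mixed commutations that suffices, and verifying that it both implies and is implied by the nesting relation, is where the real work lies.
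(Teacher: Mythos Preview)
The paper does not prove this theorem: it is quoted verbatim from \cite[Theorem~1.8]{CGP16} and used as a black box (immediately after its statement, and again to obtain the presentation \eqref{eq:cact_prsnt}). So there is no in-paper proof to compare your proposal against.

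Assessing your outline on its own merits: the reductions you set up are sound. You correctly observe that $q_m^2=1$ follows already from $t_i^2=1$ and far-commutativity, whence $q_{i,j}^2=1$ is automatic; and that $(t_i q_{j,k})^2=1$ is just the commutation $t_i q_{j,k}=q_{j,k}t_i$ once both factors are involutions. Your dictionary $t_i=q_{i-1}q_iq_{i-1}q_{i-2}$ (with $q_0=1$) is exactly the one the paper records in \cref{lema:ti_si}, and the mutual-inverse check is routine. The disjoint commutation $q_{i,j}q_{k,l}=q_{k,l}q_{i,j}$ for $j<k$ does indeed reduce, after expanding both $q$'s in the $t$'s, to a sequence of the available commutations $t_i q_{j',k'}=q_{j',k'}t_i$.

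The genuine gap is the nesting relation. You correctly diagnose that the $\mathfrak{S}_n$ shortcut $q_{j-1}t_kq_{j-1}=t_{j-k}$ is unavailable, and you state that the ``real work'' is the bookkeeping that rewrites $q_{1,j}q_{k,l}q_{1,j}$ as $q_{j+1-l,\,j+1-k}$ using only the weaker commutations; but you do not carry it out or even indicate an induction scheme or intermediate identity that closes the argument. In the original proof of Chmutov--Glick--Pylyavskyy this step is handled by an independent combinatorial model (semistandard growth diagrams), not by a purely word-combinatorial manipulation of the $t_i$; a direct word proof along the lines you sketch is possible but requires an explicit inductive identity (for instance, first establishing $q_{j-1}q_{m}q_{j-1}=q_{j-m,\,j}$ for all $m<j$ from the $t$-relations, which already encodes the full nesting for intervals starting at $1$, and then bootstrapping). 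As written, your proposal is a correct strategic outline with the central computation left undone.
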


\subsection{A shifted Berenstein--Kirillov group and the cactus group}
Similar to the definition of the Berenstein--Kirillov group, we consider $\mathcal{SBK}$ to be the free group generated by the shifted Bender--Knuth involutions $\mathsf{t}_i$, for $i > 0$,  modulo the relations they satisfy when acting on shifted semistandard tableaux of any shape. We call it the \emph{shifted Berenstein--Kirillov group}, and consider its subgroup $\mathcal{SBK}_n$ generated by $\mathsf{t}_1, \ldots, \mathsf{t}_{n-1}$. We define the involutions $\mathsf{q}_i := \mathsf{t}_1 (\mathsf{t}_2 \mathsf{t}_1) \cdots (\mathsf{t}_i \mathsf{t}_{i-1} \cdots \mathsf{t}_1)$, for $i \in I$, which coincide with $\mathsf{evac}_{i+1}$ on straight-shaped shifted tableaux. We also set $\mathsf{q}_{i,j} := \mathsf{q}_{j-1}\mathsf{q}_{j-i}\mathsf{q}_{j-1}$, for $i<j$. In particular, $\mathsf{q}_{1,j}=\mathsf{q}_{j-1}$, for $j > 1$. We remark that, in general, $\widetilde{\mathsf{evac}}_{i,j} \neq \mathsf{q}_{i,j}$. However, \cref{cor:eta_cactus} ensures that $\mathsf{q}_{i,j} \in \mathcal{SBK}$ is realized by $\eta_{i,j}$ when acting on straight-shaped tableaux. 

\begin{prop}\label{prop:rels_SBK}
The following relations hold on $\mathcal{SBK}$:

1. $\mathsf{t}_i^2 = 1$, for $i>1$.

2. $\mathsf{t}_i \mathsf{t}_j = \mathsf{t}_j \mathsf{t}_i$, for $|i-j| >1$.

3. $(\mathsf{t}_i\mathsf{q}_{j,k})^2 =1$, for $2 \leq i+1 < j < k$. In particular, $(\mathsf{t}_1 \mathsf{}q_i)^4 = 1$, for $i > 2$.
\end{prop}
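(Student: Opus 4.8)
The plan is to dispatch Parts 1 and 2 quickly from the involutivity of the switching process, and to reduce Part 3 to a single conjugation identity for the switching--evacuation operators, which carries the only real content. For Part 1, recall $\mathsf{t}_i=\theta_i\circ\mathsf{SP}_{i,i+1}$ (\cref{def:sbk_mov}) and that the shifted switching process is an involution \cite{CNO17}. After one application of $\mathsf{SP}_{i,i+1}$ the $\mathbf{i}$- and $\mathbf{(i+1)}$-contents have exchanged positions, and the relabelling $\theta_i$ restores a semistandard filling in canonical form; applying $\mathsf{t}_i$ again reverses this by involutivity of the switch, so $\mathsf{t}_i^2=\mathrm{id}$. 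For Part 2, when $|i-j|>1$ the strips involved in $\mathsf{SP}_{i,i+1}$ and in $\mathsf{SP}_{j,j+1}$ carry disjoint sets of letters, hence occupy disjoint boxes; the two switches and the relabellings $\theta_i,\theta_j$ therefore commute, giving $\mathsf{t}_i\mathsf{t}_j=\mathsf{t}_j\mathsf{t}_i$. Both facts were already recorded after \cref{def:sbk_mov}.

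For Part 3, I first observe that $(\mathsf{t}_i\mathsf{q}_{j,k})^2=1$ is equivalent, via $\mathsf{t}_i^2=1$, to the commutation $\mathsf{q}_{j,k}\mathsf{t}_i\mathsf{q}_{j,k}=\mathsf{t}_i$. On straight shapes this is immediate: there $\mathsf{q}_{j,k}=\eta_{j,k}$ (\cref{cor:eta_cactus}), and $\eta_{j,k}(T)=T^{1,j-1}\sqcup\eta(T^{j,k})\sqcup T^{k+1,n}$ fixes every letter outside $[j,k]$, whereas $\mathsf{t}_i$ moves only the letters $\{i,i+1\}\subseteq[1,j-1]$ (as $i+1<j$), so disjoint support yields the claim. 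The difficulty is that on skew shapes $\mathsf{q}_{j,k}\neq\eta_{j,k}$ (indeed $\widetilde{\mathsf{evac}}_{j,k}\neq\mathsf{q}_{j,k}$ in general), since $\mathsf{q}_{j,k}=\mathsf{q}_{k-1}\mathsf{q}_{k-j}\mathsf{q}_{k-1}=\widetilde{\mathsf{evac}}_k\,\widetilde{\mathsf{evac}}_{k-j+1}\,\widetilde{\mathsf{evac}}_k$ genuinely moves the small letters, so the disjoint-support argument breaks down.

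To cover all shapes I would isolate the conjugation lemma
\[
\mathsf{q}_m\,\mathsf{t}_a\,\mathsf{q}_m=\mathsf{t}_{m+1-a},\qquad 1\le a\le m,
\]
the shifted analogue of the classical fact that evacuation intertwines $t_a$ with $t_{N-a}$ (recall $\mathsf{q}_m=\widetilde{\mathsf{evac}}_{m+1}$ by \cref{evacpi}). Granting it, I compute
\[
\mathsf{q}_{j,k}\,\mathsf{t}_i\,\mathsf{q}_{j,k}
=\mathsf{q}_{k-1}\mathsf{q}_{k-j}\,\mathsf{t}_{k-i}\,\mathsf{q}_{k-j}\mathsf{q}_{k-1}
=\mathsf{q}_{k-1}\,\mathsf{t}_{k-i}\,\mathsf{q}_{k-1}
=\mathsf{t}_i,
\]
where the outer equalities conjugate by $\mathsf{q}_{k-1}$ (using $(k-1)+1-i=k-i$ and then $(k-1)+1-(k-i)=i$), and the middle one uses that $\mathsf{t}_{k-i}$ has support $\{k-i,k-i+1\}$ disjoint from $[1,k-j+1]$ --- because $i+1<j$ forces $k-i\ge k-j+2$ --- so it commutes with $\mathsf{q}_{k-j}=\widetilde{\mathsf{evac}}_{k-j+1}$ (the disjoint-support reasoning of Part 2) together with $\mathsf{q}_{k-j}^2=1$. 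Hence $(\mathsf{t}_i\mathsf{q}_{j,k})^2=\mathsf{t}_i(\mathsf{q}_{j,k}\mathsf{t}_i\mathsf{q}_{j,k})=\mathsf{t}_i^2=1$. The special case follows the same way: $\mathsf{q}_i\mathsf{t}_1\mathsf{q}_i=\mathsf{t}_i$ gives $(\mathsf{t}_1\mathsf{q}_i)^2=\mathsf{t}_1\mathsf{t}_i$, and since $\mathsf{t}_1,\mathsf{t}_i$ commute for $i>2$ (Part 2) we obtain $(\mathsf{t}_1\mathsf{q}_i)^4=(\mathsf{t}_1\mathsf{t}_i)^2=1$.

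The main obstacle is proving the conjugation lemma on all shapes: $\widetilde{\mathsf{evac}}$ is neither coplactic nor the reversal, so the identity cannot be transported from straight shapes by rectification nor read off from the properties of $\eta$. I would derive it from the switching definition through the promotion commutation $\mathsf{p}_m\mathsf{t}_a=\mathsf{t}_{a+1}\mathsf{p}_m$ for $1\le a<m$, then feed this into $\mathsf{q}_m=\mathsf{p}_1\cdots\mathsf{p}_m$ and induct on $m$. Establishing that promotion commutation directly from the shifted switches of \cref{fig:switches} and the interaction of switching with the relabelling $\theta_a$ is the genuinely technical point on which the whole of Part 3 rests.
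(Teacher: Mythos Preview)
The paper is an extended abstract and gives no proof of \cref{prop:rels_SBK}, so there is nothing to compare your route against directly. Your treatment of Parts~1 and~2 is correct and matches what the paper records after \cref{def:sbk_mov}.

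For Part~3 there is a genuine gap. Your conjugation lemma $\mathsf{q}_m\,\mathsf{t}_a\,\mathsf{q}_m=\mathsf{t}_{m+1-a}$ for all $1\le a\le m$ is \emph{false} in $\mathcal{SBK}$. Take $m=2$, $a=1$: since $\mathsf{q}_2=\mathsf{t}_1\mathsf{t}_2\mathsf{t}_1$, one computes $\mathsf{q}_2\mathsf{t}_1\mathsf{q}_2=\mathsf{t}_1\mathsf{t}_2\mathsf{t}_1\mathsf{t}_2\mathsf{t}_1$, and this equals $\mathsf{t}_2$ iff $(\mathsf{t}_1\mathsf{t}_2)^3=1$. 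But \cref{rmk:t1t26} exhibits a tableau with $(\mathsf{t}_1\mathsf{t}_2)^6(T)\neq T$, hence $(\mathsf{t}_1\mathsf{t}_2)^3\neq 1$ as well. Similarly, your proposed promotion commutation $\mathsf{p}_m\mathsf{t}_a=\mathsf{t}_{a+1}\mathsf{p}_m$ already fails at $m=2$, $a=1$: there $\mathsf{p}_2\mathsf{t}_1=\mathsf{t}_2$ while $\mathsf{t}_2\mathsf{p}_2=\mathsf{t}_1$. So neither auxiliary identity holds in the generality you state, and the inductive scheme you sketch cannot go through as written.

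It is true that your derivation of $(\mathsf{t}_i\mathsf{q}_{j,k})^2=1$ only \emph{uses} the conjugation lemma in the range $a\le m-2$ (equivalently $m+1-a\ge 3$), which excludes the counterexample above. But you have not argued that the restricted identity holds, and the failure of the boundary case should make you wary of assuming the type~$A$ pattern: the extra diagonal switches (S3), (S4), (S7) of \cref{fig:switches} are precisely what break $(\mathsf{t}_1\mathsf{t}_2)^6=1$, and they are equally present in the interaction of $\widetilde{\mathsf{evac}}_{m+1}$ with $\mathsf{t}_a$. At minimum you must restate the lemma with the correct range and supply a proof that genuinely engages the shifted switching rules, not a reduction to a promotion identity that is already false in small cases.
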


\begin{obs}\label{rmk:t1t26}
The operators $\mathsf{t}_i$ on shifted tableaux do not need to satisfy the relation $(\mathsf{t}_1 \mathsf{t}_2)^6 = 1$ in $\mathcal{SBK}$, as the next example shows: 
$$T = \begin{ytableau}
1 & 1 & 2' & 2 & 3\\
\none & 2 & 3' & 3\\
\none & \none & 3
\end{ytableau} \neq \begin{ytableau}
1 & 1 & 2' & 3'& 3\\
\none & 2 & 2 & 3\\
\none & \none & 3
\end{ytableau}=(\mathsf{t}_1 \mathsf{t}_2)^6 (T).$$
This has no effect in the next results, as the said relation does not follow from the cactus group relations \eqref{eq:cactus_rel}, similarly to the case with the classic Bender--Knuth involutions \cite[Remark 1.9]{CGP16}.
\end{obs}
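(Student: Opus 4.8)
The statement is a negative one---an explicit counterexample witnessing that the relation $(\mathsf{t}_1\mathsf{t}_2)^6 = 1$ fails in $\mathcal{SBK}$---so the plan is to carry out the single computation $(\mathsf{t}_1\mathsf{t}_2)^6(T)$ on the given straight-shaped tableau $T \in \mathsf{ShST}((5,3,1),3)$ and to verify that the output is a tableau distinct from $T$. I would unfold $(\mathsf{t}_1\mathsf{t}_2)^6$ as a word of twelve letters and apply the operators right-to-left, so that the rightmost $\mathsf{t}_2$ acts first. Each application invokes \cref{def:sbk_mov}: to compute $\mathsf{t}_i$ I run the shifted tableau switching $\mathsf{SP}_{i,i+1}$ on the pair $(T^i,T^{i+1})$ using the switches \textbf{(S1)}--\textbf{(S7)} of \cref{fig:switches} in the prescribed order (the rightmost $i$-box adjacent on the north or west to an $\mathbf{(i{+}1)}$-box, then the bottommost $i'$-box), and afterwards apply $\theta_i$ to swap the two values and restore canonical form. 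Recording the twelve intermediate tableaux obtained in this way yields, at the end, exactly the tableau on the right-hand side of the remark, which shares the shape $(5,3,1)$ and the weight $(2,3,4)$ of $T$ but is a different filling; this establishes $(\mathsf{t}_1\mathsf{t}_2)^6(T)\neq T$, and hence $(\mathsf{t}_1\mathsf{t}_2)^6\neq 1$ in $\mathcal{SBK}$.

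The computation is routine switch-by-switch bookkeeping, but the genuinely delicate point---and the source of the phenomenon---is the order-dependence of the shifted tableau switching, which has no analogue in type $A$. Concretely, the switches \textbf{(S3)}--\textbf{(S7)} create and destroy primed entries and can reshape the two border strips $T^i,T^{i+1}$ in a way sensitive to the choice of the active box, so the outcome of iterating $\mathsf{SP}_{1,2}$ and $\mathsf{SP}_{2,3}$ need not close up after six steps. I expect the main care to lie in correctly tracking the primed letters on and near the main diagonal, where switches such as \textbf{(S5)} and \textbf{(S7)} act, and in re-establishing canonical form after each $\theta_i$, since a single misapplied switch would corrupt the entire chain; I anticipate that the asymmetry breaking the relation originates precisely in such a diagonal switch. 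Because the weight permutation $\theta_1\theta_2$ is the three-cycle $(1\,2\,3)$, every intermediate weight is predictable---in particular the final weight must return to $(2,3,4)$---which serves as a running consistency check on the twelve steps.

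Finally, I would justify the closing sentence of the remark---that this failure does not affect the subsequent results---by recalling that the relation $(\mathsf{t}_1\mathsf{t}_2)^6=1$ is independent of the defining cactus relations \eqref{eq:cactus_rel}: it is neither one of them nor a consequence of them. The identification of $\mathcal{SBK}_n$ with a quotient of $J_n$ rests only on the cactus relations being satisfied by the maps $\mathsf{q}_{i,j}$ (via \cref{prop:rels_SBK} together with the main theorem), to which $(\mathsf{t}_1\mathsf{t}_2)^6=1$ contributes nothing. This mirrors the classical situation in \cite[Remark 1.9]{CGP16}: there $(t_1 t_2)^6 = 1$ does hold in $\mathcal{BK}_n$, yet it already fails to follow from the cactus relations and so plays no role in the presentation of the cactus group. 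Whether this relation holds, as in type $A$, or fails, as in the shifted case, is thus immaterial to the quotient structure, and it may be disregarded in what follows.
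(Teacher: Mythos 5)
Your proposal matches the paper's treatment: the remark is established precisely by the explicit counterexample, verified by the switch-by-switch computation of the twelve operators via \cref{def:sbk_mov} and the switches of \cref{fig:switches}, and your justification of the closing sentence (the relation is independent of the cactus relations \eqref{eq:cactus_rel}, exactly as in \cite[Remark 1.9]{CGP16}, so only the relations of \cref{prop:rels_SBK} matter for the quotient) is the paper's reasoning as well. Your weight-tracking consistency check via the three-cycle $\theta_1\theta_2$ is a sensible addition, though note you still have to actually carry out the twelve applications to certify the displayed output tableau; the method you describe is the correct and intended one.
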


\begin{lema}\label{lema:ti_si}
As elements of $\mathcal{SBK}$, we have
$$\mathsf{t}_1 = \mathsf{q}_1, \qquad \mathsf{t}_2 = \mathsf{q}_1 \mathsf{q}_2 \mathsf{q}_1, \qquad \mathsf{t}_i = \mathsf{q}_{i-1} \mathsf{q}_{i} \mathsf{q}_{i-1} \mathsf{q}_{i-2}, \,\text{for}\; i>2.$$
Consequently, $\mathsf{q}_1, \ldots, \mathsf{q}_{n-1}$ are generators for $\mathcal{SBK}_n$.
\end{lema}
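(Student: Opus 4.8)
The plan is to establish the three identities by unwinding the definition $\mathsf{q}_i := \mathsf{t}_1 (\mathsf{t}_2 \mathsf{t}_1) \cdots (\mathsf{t}_i \mathsf{t}_{i-1} \cdots \mathsf{t}_1)$ and then using the known relations among the $\mathsf{t}_i$ recorded in \cref{prop:rels_SBK} (namely $\mathsf{t}_i^2 = 1$ and the commutations $\mathsf{t}_i \mathsf{t}_j = \mathsf{t}_j \mathsf{t}_i$ for $|i-j|>1$) to cancel and rearrange factors. The first identity $\mathsf{t}_1 = \mathsf{q}_1$ is immediate, since $\mathsf{q}_1 = \mathsf{t}_1$ by definition. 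For the remaining two I would work purely formally in the free group modulo those relations, treating the claim as an identity of words; the switching-algorithm content is already absorbed into \cref{prop:rels_SBK}, so no tableau computation should be needed.

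First I would verify $\mathsf{t}_2 = \mathsf{q}_1 \mathsf{q}_2 \mathsf{q}_1$ directly. Writing $\mathsf{q}_1 = \mathsf{t}_1$ and $\mathsf{q}_2 = \mathsf{t}_1 \mathsf{t}_2 \mathsf{t}_1$, the right-hand side becomes $\mathsf{t}_1 (\mathsf{t}_1 \mathsf{t}_2 \mathsf{t}_1) \mathsf{t}_1$, which collapses to $\mathsf{t}_2$ using $\mathsf{t}_1^2 = 1$ twice. This is a one-line cancellation and serves as the base case. For the general identity $\mathsf{t}_i = \mathsf{q}_{i-1} \mathsf{q}_i \mathsf{q}_{i-1} \mathsf{q}_{i-2}$ with $i>2$, I would expand each $\mathsf{q}$ into its defining product of $\mathsf{t}$'s and simplify. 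The cleanest route is to exploit the recursive structure $\mathsf{q}_i = \mathsf{q}_{i-1} \cdot (\mathsf{t}_i \mathsf{t}_{i-1} \cdots \mathsf{t}_1)$, so that $\mathsf{q}_{i-1}^{-1} \mathsf{q}_i = \mathsf{t}_i \mathsf{t}_{i-1} \cdots \mathsf{t}_1$ and, since each $\mathsf{q}$ is an involution, $\mathsf{q}_{i-1}\mathsf{q}_i = (\mathsf{t}_i \cdots \mathsf{t}_1)^{-1} = \mathsf{t}_1 \cdots \mathsf{t}_i$. One then computes $\mathsf{q}_{i-1}\mathsf{q}_i\mathsf{q}_{i-1}\mathsf{q}_{i-2}$ by inserting these expressions and pushing the factor $\mathsf{t}_i$ (which commutes with every $\mathsf{t}_j$ for $j<i-1$) as far right as possible, after which the surviving $\mathsf{t}_j$ with $j \le i-1$ should cancel in telescoping pairs.

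The main obstacle, and the step deserving the most care, is controlling the long commutation-and-cancellation in the general case: one must track exactly which transpositions commute past $\mathsf{t}_i$ and confirm that the residual word on letters $1, \ldots, i-1$ reduces to the identity. A convenient way to make this rigorous, rather than juggling a long word by hand, is to recall (stated just before \cref{prop:rels_SBK}) that on straight-shaped tableaux $\mathsf{q}_i$ acts as $\mathsf{evac}_{i+1} = \eta_{1,i+1}$, so the claimed identities are consistent with \cref{cor:eta_cactus}: there $\eta_{i,j} = \eta_{1,j}\eta_{1,j-i+1}\eta_{1,j}$, which for $(i,j)=(i-1,i)$ and the formula $\mathsf{t}_i = \eta_{i-1,i}\,\eta_{?}$ gives an independent check of the exponents $i-1, i, i-1, i-2$. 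However, since $\mathcal{SBK}$ is defined by all relations holding on tableaux of every shape — not just straight shapes — I would not rely on this as a proof, but use it only as a sanity check on the word identity; the actual argument should remain the formal free-group reduction above, valid because every relation invoked (\cref{prop:rels_SBK}.1 and .2) holds universally in $\mathcal{SBK}$. The final sentence, that $\mathsf{q}_1, \ldots, \mathsf{q}_{n-1}$ generate $\mathcal{SBK}_n$, is then immediate: the three displayed identities express every generator $\mathsf{t}_i$ as a word in the $\mathsf{q}_j$, so the subgroup they generate contains all the $\mathsf{t}_i$ and hence equals $\mathcal{SBK}_n$.
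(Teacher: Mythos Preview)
The paper is an extended abstract and does not include a proof of this lemma, so there is no argument to compare against directly. Your overall strategy --- expressing everything via the recursion $\mathsf{q}_i = \mathsf{q}_{i-1}\mathsf{p}_i$ with $\mathsf{p}_i = \mathsf{t}_i\mathsf{t}_{i-1}\cdots\mathsf{t}_1$, then cancelling --- is correct and is exactly how one would expect the omitted argument to run.

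There is one slip in your write-up. You say that $\mathsf{q}_{i-1}^{-1}\mathsf{q}_i = \mathsf{t}_i\cdots\mathsf{t}_1$ and then, ``since each $\mathsf{q}$ is an involution, $\mathsf{q}_{i-1}\mathsf{q}_i = (\mathsf{t}_i\cdots\mathsf{t}_1)^{-1}$''. But $\mathsf{q}_{i-1}^{-1} = \mathsf{q}_{i-1}$ already, so those two left-hand sides are the same element; you cannot have both equal to $\mathsf{p}_i$ and to $\mathsf{p}_i^{-1}$. The identity you actually need for the second factor is
\[
\mathsf{q}_{i-1}\mathsf{q}_{i-2} \;=\; (\mathsf{q}_{i-2}\mathsf{q}_{i-1})^{-1} \;=\; \mathsf{p}_{i-1}^{-1} \;=\; \mathsf{t}_1\mathsf{t}_2\cdots\mathsf{t}_{i-1},
\]
using that both $\mathsf{q}_{i-2}$ and $\mathsf{q}_{i-1}$ are involutions. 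With this correction,
\[
\mathsf{q}_{i-1}\mathsf{q}_i\,\mathsf{q}_{i-1}\mathsf{q}_{i-2}
\;=\;(\mathsf{t}_i\mathsf{t}_{i-1}\cdots\mathsf{t}_1)(\mathsf{t}_1\mathsf{t}_2\cdots\mathsf{t}_{i-1})
\;=\;\mathsf{t}_i
\]
by a straight telescoping cancellation using only $\mathsf{t}_j^2 = 1$. No commutation relations are needed here, so the paragraph about ``pushing the factor $\mathsf{t}_i$ to the right'' is superfluous and can be dropped.

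One dependence you should make explicit: you invoke ``since each $\mathsf{q}$ is an involution'', but $\mathsf{q}_j^2 = 1$ is not among the relations listed in \cref{prop:rels_SBK}. It is asserted in the text just before that proposition and is justified by \cref{evacpi} (which identifies $\mathsf{q}_j$ with $\widetilde{\mathsf{evac}}_{j+1}$ on arbitrary skew shapes) together with the involutivity of the shifted tableau switching; cite that rather than leaving it implicit. The concluding generation statement is then indeed immediate, as you say.
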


\begin{teo}\label{teo:qi_evaci}
There is a natural action of $\mathcal{SBK}_n$ on $\mathsf{ShST}(\nu,n)$, given by the group homomorphism $\mathsf{q}_i \mapsto \mathsf{evac}_{i+1}$, which coincides with the action of $J_n$ as defined in \cref{cor:cact_evaci}.
\end{teo}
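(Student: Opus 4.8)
The plan is to realize the asserted action as the restriction to straight shapes of the tautological action of $\mathcal{SBK}_n$, and then to match it generator by generator with the cactus group action of \cref{cor:cact_evaci}. The starting observation is that $\mathcal{SBK}$ is, by construction, the quotient of the free group on the $\mathsf{t}_i$ by exactly those relations that hold when the $\mathsf{t}_i$ act on shifted semistandard tableaux of \emph{every} shape. Since each $\mathsf{t}_i$ is shape-preserving (\cref{def:sbk_mov}), the set $\mathsf{ShST}(\nu,n)$ is invariant under $\mathcal{SBK}_n$, and any word in the $\mathsf{t}_i$ that is trivial in $\mathcal{SBK}_n$ in particular acts as the identity on $\mathsf{ShST}(\nu,n)$. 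Thus the assignment sending each class to the permutation it induces on straight shapes is a well-defined group homomorphism $\rho \colon \mathcal{SBK}_n \to \mathrm{Sym}(\mathsf{ShST}(\nu,n))$; this $\rho$ is the asserted natural action, and no extra relations need be verified by hand, because $\rho$ is a restriction of an honest action.

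Next I would identify the images of the distinguished generators. Since $\mathsf{q}_i = \mathsf{p}_1 \mathsf{p}_2 \cdots \mathsf{p}_i$, \cref{evacpi} gives $\mathsf{q}_i(T) = \mathsf{evac}_{i+1}(T)$ for every $T \in \mathsf{ShST}(\nu,n)$, i.e.\ $\rho(\mathsf{q}_i) = \mathsf{evac}_{i+1}$. By \cref{lema:ti_si} the elements $\mathsf{q}_1, \ldots, \mathsf{q}_{n-1}$ generate $\mathcal{SBK}_n$, so $\rho$ is completely determined by the assignment $\mathsf{q}_i \mapsto \mathsf{evac}_{i+1}$, which is exactly the homomorphism in the statement.

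It then remains to compare $\rho$ with the cactus action. By \cref{cor:cact_evaci}, $J_n$ acts on $\mathsf{ShST}(\nu,n)$ through $\psi \colon s_{1,j} \mapsto \mathsf{evac}_j$ for $j \in I$; under the reindexing $j = i+1$ this reads $\psi(s_{1,i+1}) = \mathsf{evac}_{i+1} = \rho(\mathsf{q}_i)$. As $\{s_{1,2}, \ldots, s_{1,n}\}$ generate $J_n$ and $\{\mathsf{q}_1, \ldots, \mathsf{q}_{n-1}\}$ generate $\mathcal{SBK}_n$, the two images $\psi(J_n)$ and $\rho(\mathcal{SBK}_n)$ are the same subgroup of $\mathrm{Sym}(\mathsf{ShST}(\nu,n))$, namely the one generated by $\{\mathsf{evac}_2, \ldots, \mathsf{evac}_n\}$, with corresponding generators acting identically. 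This is precisely the sense in which the two actions coincide.

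I do not expect a serious obstacle, as the substantive work is already packaged into \cref{evacpi} and \cref{lema:ti_si}. The only points demanding care are justifying that restriction to a single straight shape genuinely produces a homomorphism out of $\mathcal{SBK}_n$ (which rests on the ``all shapes'' clause in the definition of $\mathcal{SBK}$ together with the shape-preservation of the $\mathsf{t}_i$), and keeping the index shift between $\mathsf{q}_i$, $\mathsf{evac}_{i+1}$, and $s_{1,i+1}$ consistent throughout.
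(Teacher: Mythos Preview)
Your proposal is correct and follows precisely the approach the paper sets up: the paper (an extended abstract) does not spell out a proof of this theorem, but the surrounding results \cref{evacpi}, \cref{lema:ti_si}, and \cref{cor:cact_evaci} are placed exactly so that the argument you give goes through, namely that the tautological action of $\mathcal{SBK}_n$ restricts to straight shapes, that $\mathsf{q}_i$ acts there as $\mathsf{evac}_{i+1}$, and that this matches the cactus action generator by generator. There is nothing to add beyond the index bookkeeping you already flag.
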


\begin{teo}[Main result]\label{teo:cact_sbk}
The map $\psi: s_{i,j} \mapsto \mathsf{q}_{i,j}$ is an epimorphism from $J_n$ to $\mathcal{SBK}_n$, for $1 \leq i < j \leq n$. Hence $\mathcal{SBK}_n$ is isomorphic to $J_n / \ker \psi$.
\end{teo}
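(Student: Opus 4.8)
The plan is to prove that $\psi$ is a well-defined group homomorphism by checking that it carries the defining relations \eqref{eq:cactus_rel} of $J_n$ to valid relations among the images $\mathsf{q}_{i,j}$, and then to read off surjectivity from \cref{lema:ti_si}. The starting observation is that the three relation families collected in \cref{prop:rels_SBK} — the involutivity $\mathsf{t}_i^2 = 1$, the commutations $\mathsf{t}_i\mathsf{t}_j = \mathsf{t}_j\mathsf{t}_i$ for $|i-j|>1$, and $(\mathsf{t}_i\mathsf{q}_{j,k})^2 = 1$ for $i+1<j<k$ — are, after unwinding $\mathsf{q}_{j,k} = \mathsf{q}_{k-1}\mathsf{q}_{k-j}\mathsf{q}_{k-1}$, word-for-word the shifted analogues of the left-hand relation family of \cref{thm:rel_cact_bk}. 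Thus in $\mathcal{SBK}$ the involutions $\mathsf{t}_i$ already satisfy exactly the hypotheses of that theorem.

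The key step is to transport the equivalence of \cref{thm:rel_cact_bk} from the classical to the shifted setting. That equivalence is a purely formal statement about the free group on the generators $t_i$: given the word-level definitions $q_i := t_1(t_2 t_1)\cdots(t_i\cdots t_1)$ and $q_{i,j} := q_{j-1}q_{j-i}q_{j-1}$, the normal closure of the left-hand family coincides with that of the cactus-type relations on the $q_{i,j}$. Since $\mathsf{q}_i$ and $\mathsf{q}_{i,j}$ are defined by identical words in the $\mathsf{t}_i$, the same derivations apply verbatim in $\mathcal{SBK}$, so the relations of \cref{prop:rels_SBK} force the $\mathsf{q}_{i,j}$ to satisfy
$$\mathsf{q}_{i,j}^2 = 1, \qquad \mathsf{q}_{i,j}\mathsf{q}_{k,l} = \mathsf{q}_{i+j-l,i+j-k}\mathsf{q}_{i,j}\ \text{for}\ [k,l]\subseteq[i,j], \qquad \mathsf{q}_{i,j}\mathsf{q}_{k,l} = \mathsf{q}_{k,l}\mathsf{q}_{i,j}\ \text{for}\ [i,j]\cap[k,l]=\emptyset,$$
which are precisely the relations \eqref{eq:cactus_rel} defining $J_n$. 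I would record here that the discarded relation $(\mathsf{t}_1\mathsf{t}_2)^6 = 1$ from \cref{rmk:t1t26} plays no role, since it is never invoked in the equivalence of \cref{thm:rel_cact_bk}, exactly as in the classical case.

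With the relations verified, $\psi\colon s_{i,j}\mapsto \mathsf{q}_{i,j}$ extends to a homomorphism $J_n\to\mathcal{SBK}_n$. For surjectivity I would note that $\psi(s_{1,i+1}) = \mathsf{q}_{1,i+1} = \mathsf{q}_i$ for each $i\in I$, while by \cref{lema:ti_si} the elements $\mathsf{q}_1,\ldots,\mathsf{q}_{n-1}$ generate $\mathcal{SBK}_n$; hence the image of $\psi$ is all of $\mathcal{SBK}_n$. The first isomorphism theorem then gives $\mathcal{SBK}_n\cong J_n/\ker\psi$.

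The main obstacle is the middle step, and it is conceptual rather than computational: one must be sure that the equivalence of \cref{thm:rel_cact_bk} is a genuine formal consequence of the listed relations together with the word-definitions of the $\mathsf{q}$'s, with no hidden appeal to type $A$ combinatorics or to $(t_1t_2)^6=1$. Granting this — which is exactly the content of \cite[Theorem 1.8]{CGP16} — the transfer is immediate, because \cref{prop:rels_SBK} supplies the identical input relations in $\mathcal{SBK}$. An alternative, more geometric route would verify the cactus relations on the $\mathsf{q}_{i,j}$ directly through their realization as $\eta_{i,j}$ on straight-shaped tableaux, via \cref{cor:eta_cactus} and \cref{thm:cactusaction}; but since elements of $\mathcal{SBK}$ encode relations on tableaux of \emph{all} shapes, this would establish the relations only on straight shapes and would not by itself prove them in the abstract group, so I would prefer the formal route grounded in \cref{prop:rels_SBK}.
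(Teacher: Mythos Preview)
Your argument is correct. The paper's displayed proof actually takes the alternative ``geometric'' route you describe at the end: it deduces that $\psi$ is a homomorphism from the coincidence of the $\mathcal{SBK}_n$-action $\mathsf{q}_i \mapsto \mathsf{evac}_{i+1}$ with the $J_n$-action on straight-shaped crystals (\cref{teo:qi_evaci} together with \cref{cor:cact_evaci}), and obtains surjectivity from \cref{lema:ti_si} just as you do. Your concern about that route---that relations verified only on straight shapes are not, by the definition of $\mathcal{SBK}_n$, automatically relations in the abstract group---is well taken; and indeed the paper supplies exactly your formal argument in the paragraph immediately following the proof, invoking \cref{thm:rel_cact_bk} on the relations of \cref{prop:rels_SBK} to conclude that the $\mathsf{q}_{i,j}$ satisfy the cactus relations in $\mathcal{SBK}$. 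So your approach and the paper's full argument agree in substance but are organized differently: the paper foregrounds the crystal-action coincidence as the conceptual core and defers the formal verification, while you place the formal equivalence of \cref{thm:rel_cact_bk} front and center, which has the advantage of yielding well-definedness on all shapes in one stroke.
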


\begin{proof}
\cref{lema:ti_si} ensures that $\mathsf{q}_i$ are generators for $\mathcal{SBK}_n$, for $i \in I$. Since $\mathsf{q}_i = \psi (s_{1,i})$ and thus $\mathsf{q}_{i,j} = \psi (s_{1,j-1}s_{1,j-i}s_{1,j-1})$, \eqref{eq:cactus_s1k} ensures that $\psi$ is a surjection. \cref{teo:qi_evaci} states that $\mathsf{q}_i$ acts as $\mathsf{evac}_{i+1}$ on straight-shaped tableaux, hence it follows from \cref{cor:cact_evaci} that $\psi$ is an homomorphism. Thus, $\mathcal{SBK}_n$ is isomorphic to the quotient of $J_n$ by $\ker \psi$.
\end{proof}

\cref{thm:rel_cact_bk}, which is stated in terms of group generators and not of specific operators, ensures that the relations in \cref{prop:rels_SBK} are equivalent to
$$\mathsf{q}_{i,j}^2 = 1, \qquad \mathsf{q}_{i,j} \mathsf{q}_{k,l} \mathsf{q}_{i,j} = \mathsf{q}_{i+j-l,i+j-k}, \;\text{for}\; i \leq k < l \leq j, \qquad \mathsf{q}_{i,j} \mathsf{q}_{k,l} = \mathsf{q}_{k,l} \mathsf{q}_{i,j}, \;\text{for}\; j <k.$$
This means that the generators $\mathsf{t}_i$, for $i \in I$, provide an alternative presentation for $J_n$:

\begin{equation}\label{eq:cact_prsnt}
J_n = \langle \mathsf{t}_{i},\; i \in I \; | \; \mathsf{t}_i^2 = 1, \mathsf{t}_i \mathsf{t}_j= \mathsf{t}_j \mathsf{t}_i, \;\text{if}\; |i-j| >1, (\mathsf{t}_i \mathsf{q}_{j,k})^2 =1, \;\text{for} \; i+1 < j < k \rangle.
\end{equation}

\acknowledgements{The author is grateful to her supervisors O. Azenhas and M. M. Torres, and acknowledges the hospitality of the Department of Mathematics of University of Coimbra. 
Thanks also to Seung-Il Choi and Thomas Lam for pointing to O. Azenhas the references  \cite{CNO17} and \cite{CGP16}, respectively, during the FPSAC18 and the 82nd Séminaire Lotharingien de Combinatoire. 
 The author is partially supported by the LisMath PhD program (funded by the Portuguese Science Foundation). This research was made within the activities of the Group for Linear, Algebraic and Combinatorial Structures of the Center for Functional Analysis, Linear Structures and Applications (University of Lisbon), and was partially supported by the Portuguese Science Foundation, under the project UIDB/04721/2020.}

{\small \bibliographystyle{siam}\bibliography{bibliography}}

\end{document}